\begin{document}

\title{Shape Theorems for Poisson Hail\\ on a Bivariate Ground}

\author{Fran\c cois Baccelli}
\address{Department of Mathematics, University of Texas, Austin, TX 78712}
\email{baccelli@math.utexas.edu}

\author{H\'ector A. Chang-Lara}
\address{Department of Mathematics, Columbia University}
\email{changlara@math.columbia.edu}

\author{Sergey Foss}
\address{Heriot-Watt University, Edinburgh, EH14 4AS, UK
and Sobolev Institute of Mathematics, Novosibirsk, 630090, Russia}
\email{s.foss@hw.ac.uk}

\begin{abstract}

We consider an extension of the Poisson Hail model where the service speed is either zero or infinity at each point of the Euclidean space.
We use and develop tools pertaining to sub-additive ergodic theory
in order to establish shape theorems for the growth of the ice-heap
under light tail assumptions on the hailstone characteristics.
The asymptotic shape depends on the statistics of the
hailstones, the intensity of the underlying Poisson point
process and on the geometrical properties of the zero speed set.
\end{abstract}
\maketitle
\subjclass{60D05, 60F15, 60G55}
\keywords
{
Point process theory,
Poisson rain,
stochastic geometry,
random closed set, 
time and space growth,
shape,
queuing theory,
max-plus algebra,
heaps,
branching process,
sub-additive ergodic theory.
}


\section{Introduction}

The present paper revisits the Poisson Hail growth model introduced in \cite{MR2865637}. This model features i.i.d. pairs, consisting each of a
compact Random Closed Set (RACS) and a positive number, arriving on $\R^d$
according to a Poisson rain. Each pair is referred to as a {\em hailstone};
the RACS is referred to as the {\em footprint} of the hailstone and 
the positive number is its {\em height}.
Each point of the Euclidean space is a server (in the queuing theory sense).
The case studied in \cite{MR2865637} is that with one type of servers.

The pure growth model is that where the service
speed of each point of $\R^d$ is zero, and where the hailstones accumulate
over time to form a random (ice) heap.
This model can be seen as a simplified version of the  
the so-called {\em diffusion limited aggregation} (DLA)
model \cite{Halsey} with half space initial condition.  
The main difference between this model and DLA lies in the fact that
the hailstones fall in a privileged direction (e.g. according to gravity)
in the former case rather than in a diffusive way in the latter.

The height of a tagged hailstone in this heap is the sum of its own height plus the maximum of the heights of all hailstones that arrived before and that have a footprint that intersects that of the tagged one.
It was shown in \cite{MR2865637} that when the $d$-th power of the random diameters and the random heights have light-tailed distributions, i.e. have finite exponential moment, then the growth of the random heap of the pure growth model
is asymptotically linear with time. 
This result was extended to certain heavy tailed distributions lately \cite{FossMount}. In \cite{MR2865637}, the case where all servers have a constant
positive service speed was also analyzed.
The model with positive service speed is motivated by
wireless communications: transmitters arrive according
to a Poisson rain in the Euclidean plane.
The footprint of an arrival is a spatial exclusion area
which should be free of other transmitters during some random transmission
time (the height of the arrival).
The hard exclusion rule is simply obtained by a First in First out
serialization:
an arriving transmitter should first wait for his exclusion
area to be free of all those arrived before; it then transmits and
finally leaves.

The present paper considers a bivariate generalization of this model
with two types of servers.
All points in some subset of $\R^d$, called the {\em substrate}, have
zero service speed,
whereas service speed is infinite in the complement. 
For instance, when the substrate is limited to a single point (a special
case that we refer to as the {\em stick model} below),
hailstones get aggregated to the heap if their footprint intersects
this point or the footprint of any earlier hailstone that is part of
the heap, which is some analogue of DLA with an initial condition
given by a point. As above, the main difference is that the
diffusive and isotropic arrivals of DLA are replaced by pure gravitation.
In the wireless setting alluded to above, 
this model allows one to evaluate the negative consequences of
the FIFO rule. The substrate represents a customer
with a very long transmission time (zero speed) and the
complement normal operation (simplified to infinite speed).
The bivariate model hence explains how congestion builds at the fluid
scale in this FIFO model.

The present paper studies the asymptotic shape of this RACS when time tends to infinity in this bivariate speed setting. 

Like the model of \cite{MR2865637}, this variant belongs to the class of infinite dimensional max-plus linear systems \cite{baccelli92syncandlin}. Among the few instances of such systems studied in the past, the closest is the work on infinite tandem queuing networks \cite{Baccelli99asymptoticresults}. The underlying structure of the max-plus recursion in \cite{Baccelli99asymptoticresults} is a two dimensional lattice. In contrast, here, the underlying structure of the recursion is random. Among common aspects, let us stress shape theorems. The lattice shape theorems in \cite{Baccelli99asymptoticresults} are related to those in first passage percolation \cite{sepp}, in the theory of lattice animals \cite{CGGK,GaKe}. Those of the present paper pertain to first passage percolation in random media. This topic was studied in certain random graphs like the configuration model \cite{er1} lately. The shape theorems established in the present paper are based on random structures of the Euclidean space, which stem from point process theory (Poisson rain) and stochastic geometry (RACS).

In Section \ref{sec:model} we provide the precise formulation of the model. In Section \ref{sec:stick} we study the stick model alluded to above. In this case, Theorem \ref{thm:onesupp} establishes a linear asymptotic growth for the maximum height of the heap in a convex set of directions. Also for the stick model, Theorem \ref{thm:hor_growth} establishes a linear asymptotic growth for the footprint of the heap. Both proofs rely on the version of the Super Additive Ergodic Theorem by T. Liggett \cite{MR806224}. Based on these results we are able to prove in Theorem \ref{thm:phasetransition} the existence of an asymptotic phase transition for the heap in the stick model.

The stick model is interesting not only because of its similarity with e.g. the DLA, but also because it is instrumental 
to extend some of the previous results to more general substrates,
as shown in the subsequent sections. The idea originates from \cite{MR2865637} and, heuristically, it consists in reversing time and gravitation about a given point. Analogues of Theorem \ref{thm:onesupp} are extended by this duality argument for compact substrates in Section \ref{sec:bounded}, Theorem \ref{thm:bounded} and convex conical substrates in Section \ref{sec:convexCone}, Theorem \ref{thm:cone_base}. Let us emphasize that conic substrates are the basic cases we need to understand after performing the blow-up of a given profile which arises in the asymptotic analysis. The extension of these results to non convex conical substrates remains open.

\section{The model}\label{sec:model}

We consider a queue where the servers are the points of $\R^d$. We distinguish two types of servers: $\cC$  is the set servers with a service speed equal to zero, and $\R^d \sm \cC$ is that of servers with a service speed equal to infinity. The customers are characterized by:
\begin{enumerate}
\item A random closed set (RACS) of $\R^d$, such that the $d$-th power of the
diameter has a light-tailed distribution;
\item A random service time also light-tailed.
\end{enumerate}
These customers arrive to the queue ($\R^d$) according to a Poisson rain with intensity $\l$.

Starting with an empty queue at time $t=0$, a customer gets queued if it hits $\cC$ or if it hits an earlier customer which was already queued.

%
%
\begin{figure}
 \begin{center}
  \includegraphics[width=8cm]{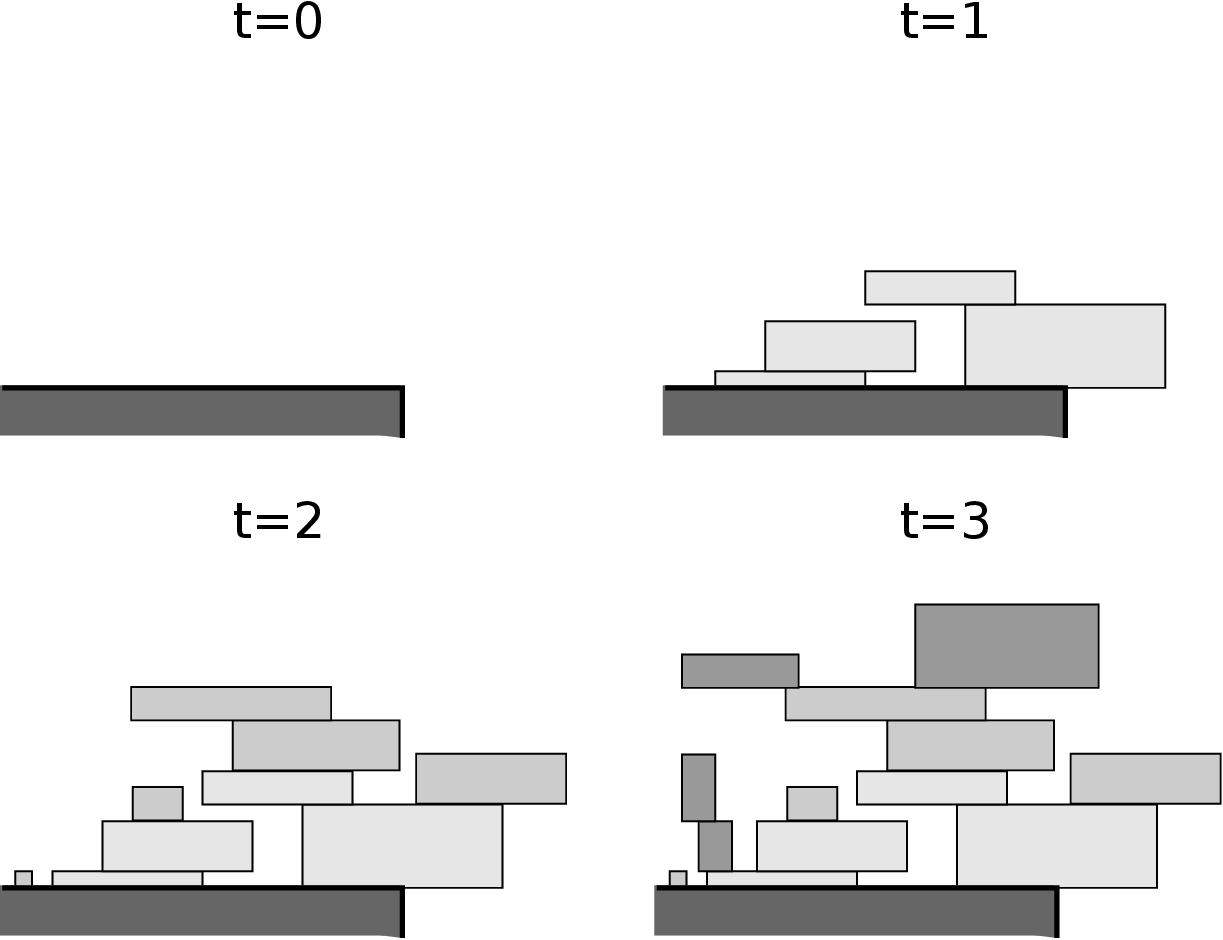}
 \end{center}
 \caption{Evolution of a heap}
\end{figure}

The ice heap is a random set of $\R^d\times \R$, and the main questions of interest are about the growth of its height in various directions, and about the growth of its {\em spatial projection} 
(defined as its projection on $\R^d$), again in various directions.

\subsection{Precise Formulation}

Consider a homogeneous Poisson point process $\Phi$ in $\R^d\times\R$ with intensity $\l>0$ defined on a probability space $(\W,\cF,\P)$. $\Phi$ can be seen as simple counting measure, namely as a sum of delta distributions at (different) points in $\R^d\times\R$. For every Borel set $A \ss \R^d\times\R$, $\Phi(A)$ counts the number of points that belong to the set $A$. By being Poisson homogeneous we mean the following:
\begin{enumerate}
\item $\Phi(A)$ has a Poisson distribution with parameter $\l|A|$,
where $|\cdot|$ denotes the Lebesgue measure in $\R^d\times\R$.
\item Given pairwise disjoint subsets $A_1, \ldots, A_n$ of $\R^d\times\R$, the random variables $\Phi(A_1), \ldots, \Phi(A_n)$ are independent.
\end{enumerate}

This point process is independently marked. Each point comes with a pair of marks. These pairs are independent and identically distributed. However stochastic dependence within a pair is allowed. Let $\{(C_{(x,t)},\s_{(x,t)})\}_{(x,t) \in \Phi}$ denote the marks. These are i.i.d. random pairs. The mark of point $(x,t)$ consists of a
compact RACS $C_{(x,t)}$ centered at the origin (e.g. the center of mass of the RACS
is 0) and of a random variable $\s_{(x,t)}$ taking values in $\R^+$. 

Let 
$$
\xi_{(x,t)} := \diam (C_{(x,t)}) := \sup \{ |y-z|: y,z\in C_{(x,t)} \}
$$ 
be the diameter of set $C_{(x,t)}$. We assume that both random variables 
$\sigma_{(x,t)}$ and $\xi^d_{(x,t)}$ (the $d$th power of $\xi_{(x,t)}$)
are {\it light-tailed}, in that
%
\begin{align}
\E(\exp(c\xi_{(x,t)}^d)) < \8,\quad
\E(\exp(c\s_{(x,t)})) < \8,
\label{eq:ltail}
\end{align}
for some constant $c>0$ (note that the law of $\xi_{(x,t)}$
is the same for all $(x,t)$ 
and that a similar observation holds for $\s_{(x,t)}$;
so that there is only two conditions here).
%

The homogeneity assumption is reflected by the following {\em compatibility property}. Given the group of translations
\begin{align*}
T_{(x_0,t_0)}: (x,t) \mapsto (x,t)+(x_0,t_0)
\end{align*}
of $\R^d\times\R$, there exists $S:\R^d\times\R\times\W \to \W$ measurable and satisfying the following properties:
\begin{enumerate}
 \item {\em Measure preserving:} For every $(x_0,t_0) \in \R^d\times\R$, $S_{(x_0,t_0)}:\W\to\W$ is measure preserving.
 \item {\em Group property:} $S_{(x_0,t_0)} \circ S_{(x_1,t_1)} = S_{(x_0 + x_1,t_0 + t_1)}$ and $S_{(0,0)} = Id$.
 \item {\em Compatibility:}
 \begin{align*}
 \Phi\circ S_{(x_0,t_0)} (A)(\omega) = \Phi(T_{(x_0,t_0)}A)(\omega),\quad A\subset \R^d\times \R.
 \end{align*}
\end{enumerate}
One can then extend the sequence of marks to a random process $(C_{(x,t)},\s_{(x,t)})$ defined on $\R^d\times \R$ and such that
\begin{align*}
(C_{(x,t)},\s_{(x,t)}) = (C_{(0,0)},\s_{(0,0)}) \circ S_{(x,t)},\quad
\forall (x,t).
\end{align*}

Because of the Poisson and independence assumptions, there is no loss of generality
in assuming that the flow $S$ is ergodic. In particular, for every measurable $G \ss \W$ such that
\begin{align*}
\P(S_{(0,t)}^{-1}G \Delta G) = 0 \text{ for every $t \in \R$},
\end{align*}
we have $\P(G) = 0$ or 1. Here $F\Delta G = (F\setminus G) \cup (G\setminus F)$ is the symmetric difference
of $F$ and $G$.

\subsection{Height Profile Function}

Let $H_{(x,t)}$ be the height of the heap at location
$x \in \R^d$ at time $t \geq 0$. When the substrate $\mathcal K$
is the whole Euclidean space, the construction of
this function and the identification of the conditions under which
it is non degenerate (e.g. not equal to $+\infty$ a.s.
for all $x$ and all $t>0$) are one of the main achievements
of \cite{MR2865637}. This construction relies on a sequence of
steps, all relying on the monotonicity properties
of the dynamics. These steps, which include a discretization scheme,
a percolation argument and a branching upper bound, are
combined to show that, under the foregoing tail and
independence assumptions, $H_{(x,t)}$ is a.s. finite
for all $x$ and $t<\infty$.

The tail and independence assumptions are the same as in \cite{MR2865637}.
The finiteness of the height profile function
for a substrate $\mathcal K\subset \R^d$ then
follows from the monotonicity properties of this
function w.r.t. the initial condition which is here
\begin{align*}
H_{(x,0)} = \begin{cases}
0 &\text{ if $x \in \cC$},\\
-\8 &\text{ if $x \notin \cC$}
\end{cases}
\end{align*}
in place of $H_{(x,0)}\equiv 0$ in \cite{MR2865637}.
The construction of \cite{MR2865637} also shows that,
for all $x$, the function $t\to H(x,t)$ is piecewise constant.
Note that it here takes its values
in $\R^+\cup \{-\infty\}$. It will be assumed right continuous.
The left limit of $H_{(x,.)}$ at $t$ will be denoted by $H_{(x,t-)}$

\subsection{Stochastic Differential Equation}
The dynamics can also be described by a stochastic differential equation 
which we briefly outline in this subsection (in spite of the
fact that it will not be used below) as it is of independent interest.

Let $N^x$ denote the Poisson point process of $\R^d\times \R$ of
RACS intersecting location $x$, i.e.
\begin{align*}
N^x(B\times [a,b]) = \int_{B \times [a,b]} 1(x \in C_{(y,s)} + y) \Phi(dyds),
\end{align*}
for all $a<b$ and $B$ Borel sets of $\R^d$.
For $t > u \geq 0$, if $H_{(x,u)}>0$, then
\begin{align}\label{eq:recursionH}
H_{(x,t)} = H_{(x,u)} + \int_{\R^d\times [u,t]} \1\s_{(z,v)} +
\sup_{y \in C_{(z,v)} + z} H_{(y,v)} - H_{(x,v-)} \2N^x(dzdv).
\end{align}
The rationale is that at the first point of $N^x$, say $(z,w)$ in $[u,t]$ if any,
$H(x,u)$ is cancelled by $H(x,w-)$ and the new value
of $H(x,.)$ is 
$$H(x,w)=\s_{(z,w)} + \sup_{y \in C_{(z,w)} + z} H_{(y,w)}.$$
If $H_{(x,u)}=-\infty$, this equation still holds when interpreting
$-\8$ as a $-K$ with $K$ large. For instance, in this case, at the first arrival of $N^x$,
\begin{eqnarray*}
H_{(x,t)} & = & H_{(x,u)} + \s_{(z,w)} + \sup_{y \in C_{(z,w)} + z} (H_{(y,w)} - H_{(x,w-)})\\
& = & -K + \s_{(z,w)} + \sup_{y \in C_{(z,w)} + z} H_{(y,w)} + K\\
& = & \s_{(z,w)} + \sup_{y \in C_{(z,w)} + z} H_{(y,w)} .
\end{eqnarray*}
If for all $y\in C_{(z,w)} + z$, $H_{(y,w)}=-\infty$, then $H_{(x,w)}=-\8$ too.
Else, $H_{(x,w)}>0$.

It follows from the construction summarized in the 
previous subsection that, under the foregoing
tail and independence assumptions,
(\ref{eq:recursionH}) has a piecewise constant solution.
All the results of the paper can hence be rephrased as
properties of this stochastic differential equation.


\subsection{Monotonicity}
\label{sec:23}
The proposed model is {\it monotone} in several arguments.

{\it Monotonicity  in $\cC$}. For two systems
with the same data $(\Phi, \{C,\s\})$ but with initial substrates
$\cC^{(1)} \ss \cC^{(2)}$, 
the associated heights $H^{(1)}$ and $H^{(2)}$ satisfy
\begin{align*}
H^{(1)}_{(x,t)} \leq H^{(2)}_{(x,t)} \text{ for every $(x,t) \in \R^d\times[0,\8)$}.
\end{align*}

Similarly, there is monotonicity in $t$, in the $\sigma$'s and in the $C$'s.


\section{The Stick Model: $\cC = \{0\}$}\label{sec:stick}

In this Section we consider the case $\cC = \{0\}$ and call it {\it the stick model}.   Theorem \ref{thm:onesupp}
shows that there exists a finite asymptotic limit for the maximal
height of the associated heap $H_{(x,t)}$ (referred to as the
\emph{stick heap} below) in any given convex set of directions.
Theorem \ref{thm:hor_growth} shows that there exists a finite
asymptotic limit for how far the spatial projection of the heap
grows, measured
with respect to a {\em set-gauge} to be defined.

\subsection{Height Growth}
In this section we focus on the maximal height, $\H^{(\Theta)}_t$, of the stick heap among
all directions in a set of directions $\Theta$, which is defined as follows:

\begin{definition}
\label{def31}
For $\Theta \ss S^d_+ := \{(x,h) \in \R^d\times(0,1]: \ |(x,h)| = 1\}$ 
non empty, 
\begin{align*}
\H^{(\Theta)}_t  := \sup\3 h \in [0,\8): \exists \ x \in \R^d \text{ such that } (x,h) \in |(x,h)|\Theta,
H_{(x,t)}\geq h\4.
\end{align*}
\end{definition}
In particular, if $\Theta =\{(0,1)\}$, the north pole of $S^d_+$, then $\H^{(\Theta)}_t=H_{(0,t)}$.

Since $H_{(0,t)}\ge 0$, the set where the supremum is evaluated in the last definition is
non-empty as it always contains $h=0$ (since $0 \Theta=\{(0,0)\}$). This 
also implies that $\H^{(\Theta)}_t \geq 0$.

\begin{figure}
 \begin{center}
  \includegraphics[width=6cm]{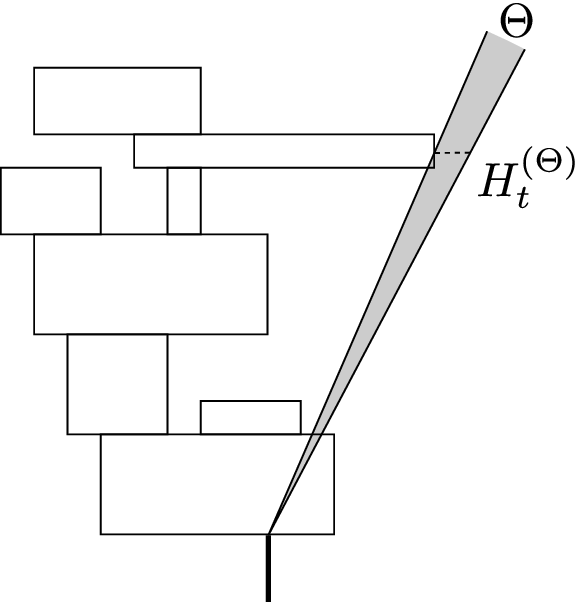}
 \end{center}
 \caption{Definition of $\H^{(\Theta)}_t$ with $\cC = \{0\}$}
\end{figure}

\begin{definition}
A set $\Theta \ss S^d_+$ is convex if for all $\theta_1, \theta_2 \in \Theta$ and $s \in [0,1]$,
\begin{align*}
s\theta_1 + (1-s)\theta_2 \in |s\theta_1 + (1-s)\theta_2|\Theta.
\end{align*}
\end{definition}
Notice that if $\Theta$ is convex, then for all $a,b \geq 0$ and $\theta_1, \theta_2\in\Theta$, we have
\begin{align*}
a\theta_1 + b\theta_2 \in |a\theta_1 + b\theta_2|\Theta.
\end{align*}

\begin{theorem}\label{thm:onesupp}
For all $\Theta \ss S^d_+$ convex and closed, there exists a non-negative constant  $\gamma^{(\Theta)}$ such that
\begin{align*}
\lim_{t\to\8} \frac{\H^{(\Theta)}_t }{t}
= \lim_{t\to\8}\frac{\E \H^{(\Theta)}_t}{t}
= \sup_{t>0} \frac{\E \H^{(\Theta)}_t}{t} = \gamma^{(\Theta)}< \8,
\end{align*}
where the first limit holds both in the a.s. and the $L_1$ sense.
\end{theorem}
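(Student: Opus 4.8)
The plan is to prove Theorem~\ref{thm:onesupp} by an application of Kingman's subadditive (here superadditive) ergodic theorem, using the ergodic flow $S$ already built into the model. The first step is to define, for $0 \le u \le t$, a family of random variables $\H^{(\Theta)}_{u,t}$ describing the maximal height in the directions of $\Theta$ of the heap generated only by the hailstones arriving in the time window $[u,t)$, started from the ``stick'' initial condition $H_{(0,u)}=0$, $H_{(x,u)}=-\infty$ for $x\ne 0$, but with the height now measured relative to the apex point $(0,u)$ (i.e.\ we translate so that the supporting point of $\cC$ sits at space-time origin of the window). By the compatibility property one has $\H^{(\Theta)}_{u,t} = \H^{(\Theta)}_{0,t-u}\circ S_{(0,u)}$, which gives the required stationarity; and $\H^{(\Theta)}_{0,t} = \H^{(\Theta)}_t$ recovers the quantity of interest. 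The core structural fact to establish is the superadditivity inequality
\begin{align*}
\H^{(\Theta)}_{u,t} \ge \H^{(\Theta)}_{u,s} + \H^{(\Theta)}_{s,t}\circ S_{(0,s)} \qquad (u\le s\le t),
\end{align*}
or rather its correct directional analogue: a vertical stack of the window-$[u,s)$ heap and (a translate of) the window-$[s,t)$ heap is dominated by the true heap on $[u,t)$, because adding the later hailstones on top of an existing heap can only increase heights (monotonicity in $\sigma$ and in the initial condition, Section~\ref{sec:23}), and because a hailstone reaching height $h$ above a point that is itself at height $\H^{(\Theta)}_{u,s}$ contributes in a direction that, by convexity of $\Theta$, still lies in $\mathbb{R}_{\ge 0}\Theta$.

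The second step is to verify the integrability hypothesis $\E\,\H^{(\Theta)}_{0,1} < \infty$ (equivalently $\E\,\H^{(\Theta)}_t<\infty$ for one, hence all, $t$). Here I would reuse the estimates from \cite{MR2865637}: the maximal height at time $t$ of the whole-space Poisson Hail heap has finite expectation (indeed finite exponential moment) under the light-tail assumptions \eqref{eq:ltail}, and by monotonicity in $\cC$ the stick heap ($\cC=\{0\}$) is dominated pointwise by the whole-space heap ($\cC=\mathbb{R}^d$), so $\H^{(\Theta)}_t \le \sup_x H^{(\mathbb{R}^d)}_{(x,t)}$ on the relevant region and inherits the bound. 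One also needs the elementary bound $\E\,\H^{(\Theta)}_t \ge 0$ and subadditivity of $t\mapsto \E\,\H^{(\Theta)}_{0,t}$ being replaced by superadditivity, which together with the uniform bound forces the limit $\gamma^{(\Theta)} = \sup_{t>0}\E\,\H^{(\Theta)}_t/t$ to be finite and non-negative; the identification of $\lim$ with $\sup$ of the mean is the standard Fekete half of the argument.

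The third step is the invocation of the superadditive ergodic theorem: given stationarity, superadditivity, the integrability bound, and ergodicity of $S_{(0,\cdot)}$, Kingman's theorem yields a constant $\gamma^{(\Theta)}$ with $\H^{(\Theta)}_t/t \to \gamma^{(\Theta)}$ almost surely and in $L_1$, and $\gamma^{(\Theta)} = \lim_t \E\,\H^{(\Theta)}_t/t = \sup_{t>0}\E\,\H^{(\Theta)}_t/t$. One subtlety is that Kingman's theorem is usually stated along integer times; passing to the continuous-time limit requires a routine monotonicity/interpolation argument, controlling $\sup_{t\in[n,n+1]}|\H^{(\Theta)}_t - \H^{(\Theta)}_n|$, again via the light-tail bounds on the number and size of hailstones in a unit time slab.

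I expect the main obstacle to be pinning down the correct superadditive cocycle so that it genuinely dominates $\H^{(\Theta)}_t$ and simultaneously has the exact stationarity $\H^{(\Theta)}_{u,t}=\H^{(\Theta)}_{0,t-u}\circ S_{(0,u)}$: the naive ``heap-on-heap'' stacking is vertical, but $\H^{(\Theta)}$ measures height along slanted directions from the apex, so one must check carefully that when the lower block has already grown to height $\H^{(\Theta)}_{u,s}$ at some point $x$ with $(x,\H^{(\Theta)}_{u,s})\in|(x,\cdot)|\Theta$, the hailstones of the upper block piled above $x$ produce a point $(x',h')$ that is still counted by $\H^{(\Theta)}_{u,t}$ — this is precisely where convexity and closedness of $\Theta$ are used, and where the bookkeeping between ``height above the apex'' and ``height above the current heap surface'' has to be done with care. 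Everything else (integrability, ergodicity, continuous-time interpolation) is either quoted from \cite{MR2865637} or standard.
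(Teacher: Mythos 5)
Your overall skeleton (a superadditive ergodic theorem, superadditivity obtained by stacking a fresh stick heap on top of the apex of the old one, integrability imported from the bounds of \cite{MR2865637}) matches the paper's, but there is a genuine gap at exactly the point you flag as ``the main obstacle'', and the resolution is not the one you sketch. The family $\H^{(\Theta)}_{u,t}=\H^{(\Theta)}_{0,t-u}\circ S_{(0,u)}$, recentred only in time, is \emph{not} superadditive: at time $s$ the lower heap attains its $\Theta$-maximal height at a random spatial location $X_s$ which is in general far from the origin, and the only heap that can be stacked on top of it is the stick heap restarted at the space-time point $(X_s,s)$, not at $(0,s)$ --- the fresh heap seeded at the origin at time $s$ need not touch the existing heap at all. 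Convexity of $\Theta$ handles the directional bookkeeping once the stacking point is correct (this is the step \eqref{eq:1}--\eqref{eq:2} via Lemma \ref{lemmedebase}), but it cannot repair the wrong recentring. Once you recentre at $(X_{t_1},t_1)$, as the paper does, you lose the exact cocycle structure under a fixed measure-preserving flow, so plain Kingman no longer applies; you must (i) show that shifting by the random, past-measurable point $X_{t_1}$ preserves the law of the future and yields independence from the past --- this is Lemma \ref{lem:aux}, which uses the Poisson and independent-marking structure in an essential way --- and (ii) invoke Liggett's distributional form of the superadditive ergodic theorem, which only requires stationarity and independence of the joint distributions of the increments rather than an exact flow identity. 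This tension between superadditivity and stationarity is the actual crux of the proof and is left unresolved in your proposal.

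A second, smaller gap is the integrability step. Monotonicity in $\cC$ gives $H^{\{0\}}_{(x,t)}\le H^{(\R^d)}_{(x,t)}$ pointwise, but $\H^{(S^d_+)}_t=\sup_x H^{\{0\}}_{(x,t)}$, and $\sup_{x\in\R^d}H^{(\R^d)}_{(x,t)}=\8$ a.s.\ by spatial homogeneity, so the proposed domination is vacuous; restricting the supremum to the footprint $F_t$ reintroduces the problem of controlling $\diam(F_t)$ jointly with the heights. What is actually needed is that only finitely many RACS ever attach to the stick by time $t$, which the paper extracts from a branching-process upper bound (Lemma \ref{lem:L1bound}, proved in the Appendix along the lines of Theorem 2 of \cite{MR2865637}); alternatively one can identify $\sup_x H^{\{0\}}_{(x,t)}$ in law with $H^{(\R^d)}_{(0,t)}$ via the time-reversal duality of Section \ref{sec:convexCone}, but that is not the argument you give.
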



Before proving this theorem, we give a few preliminary lemmas.

The following lemma is a direct consequence of the independence of the Poisson rain in disjoint sets and of homogeneity. In this lemma, $\Phi\cap B$ denotes the set of points of $\Phi$ that belong to $B$.


\begin{lemma}\label{lem:aux}
Let $X:\Omega \to \R^d$ be a random variable which is independent of $\{\Phi\cap B,(C_{(y,s)},\s_{(y,s)}): (y,s)\in B\cap \Phi, B \in \mathcal B(\R^d \times (0,\8))\}$.
Then, for every $\Theta \ss S^d_+$, the stochastic process $\{\H^{(\Theta)}_t  \circ S_{(X,0)}, t>0\}$
has the same law as $\{\H^{(\Theta)}_t, t>0\}$ and it is independent of the $\s$-algebra generated by $X$ and $\{\Phi\cap B,(C_{(y,s)},\s_{(y,s)}): (y,s)\in B\cap \Phi, B \in \mathcal B(\R^d \times (-\8,0])\}$.
\end{lemma}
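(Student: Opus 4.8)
The plan is to exploit the shift-invariance of the Poisson rain together with the independence properties it enjoys across disjoint regions of $\R^d\times\R$. First I would observe that the height process $\{H_{(x,t)}\}$, and hence the derived process $\{\H^{(\Theta)}_t\}$, is built as a deterministic measurable functional of the restriction of the marked point process $\Phi$ to the time slab $\R^d\times(0,\8)$, because the recursion \eqref{eq:recursionH} only ever consults arrivals in the interval $[0,t)$ and the initial condition $H_{(\cdot,0)}$ is deterministic (it is $0$ at the origin and $-\8$ elsewhere, since $\cC=\{0\}$). Call this functional $F$, so that $\{\H^{(\Theta)}_t,t>0\}=F\bigl(\Phi|_{\R^d\times(0,\8)},\{(C,\s)\}\bigr)$.

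Next I would use the compatibility property of the flow $S$: applying $S_{(X,0)}$ shifts the configuration spatially by $-X$ (or $+X$, per the sign convention of $T$) while leaving the time coordinate untouched, so that $S_{(X,0)}$ maps the slab $\R^d\times(0,\8)$ onto itself. Consequently $\{\H^{(\Theta)}_t\circ S_{(X,0)}\}=F\bigl(\Phi|_{\R^d\times(0,\8)}\circ S_{(X,0)},\{(C,\s)\}\circ S_{(X,0)}\bigr)$. The key point is that the spatial homogeneity of a homogeneous Poisson process on $\R^d\times(0,\8)$, together with the i.i.d.\ marking, means that for each fixed deterministic vector $v\in\R^d$ the shifted marked process has the same law as the original one; then one upgrades this to the random shift $X$ by conditioning on $X$ and using the assumed independence of $X$ from $\Phi|_{\R^d\times(0,\8)}$ and its marks. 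This gives the distributional identity $\{\H^{(\Theta)}_t\circ S_{(X,0)},t>0\}\overset{d}{=}\{\H^{(\Theta)}_t,t>0\}$.

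For the independence claim I would argue that $\Phi|_{\R^d\times(-\8,0]}$ (with its marks) is independent of $\Phi|_{\R^d\times(0,\8)}$ (with its marks) by the independence of the Poisson process on disjoint Borel sets and the independence of the i.i.d.\ marks; adjoining $X$, which is assumed independent of the positive-time data, the triple $\bigl(X,\ \Phi|_{\R^d\times(-\8,0]}\text{ with marks},\ \Phi|_{\R^d\times(0,\8)}\text{ with marks}\bigr)$ has the product-type structure needed. Now $\{\H^{(\Theta)}_t\circ S_{(X,0)}\}$ is a measurable function of $X$ and of $\Phi|_{\R^d\times(0,\8)}$ with marks, evaluated along the shift — but conditioning on $X=x$, the conditional law of $\Phi|_{\R^d\times(0,\8)}\circ S_{(x,0)}$ is still that of the original positive-time marked process and, crucially, is still independent of the negative-time $\sigma$-algebra. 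Taking expectations over $X$ preserves this conditional independence, yielding that $\{\H^{(\Theta)}_t\circ S_{(X,0)},t>0\}$ is independent of the $\sigma$-algebra generated by $X$ and the negative-time marked configuration.

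I expect the main obstacle to be the careful bookkeeping in the random-shift step: one must phrase the shift-invariance as a statement about regular conditional distributions given $X$, and verify that the functional $F$ is genuinely measurable with respect to the positive-time data only (which hinges on the recursion \eqref{eq:recursionH} being a well-posed causal construction — this is where the reference to the analogous construction in \cite{MR2865637} is invoked). Once measurability and the slab-preservation of $S_{(X,0)}$ are in hand, the distributional identity and the independence both follow from standard disintegration arguments, so the remainder is routine.
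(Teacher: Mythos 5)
Your proposal is correct and follows exactly the route the paper intends: the paper offers no written proof, merely asserting the lemma as ``a direct consequence of the independence of the Poisson rain in disjoint sets and of homogeneity,'' and your argument---causal measurability of $\H^{(\Theta)}$ with respect to the positive-time slab, preservation of that slab by $S_{(X,0)}$, translation invariance for deterministic shifts upgraded to the random shift by conditioning on $X$, and Poisson independence across the slabs $(-\8,0]$ and $(0,\8)$---is precisely the fleshed-out version of that assertion. No discrepancy to report.
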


%

\begin{lemma}\label{lem:L1bound}
\begin{align}
\label{eq:suphei}
\sup_{t>0} \frac{\E \H^{(S^d_+)}_t}{t} < \8.
\end{align} 
\end{lemma}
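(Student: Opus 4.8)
The plan is to dominate the maximal height $\H^{(S^d_+)}_t$ of the stick heap by the height at the origin of a \emph{full-substrate} Poisson Hail model — i.e.\ the model of \cite{MR2865637} with $\cC = \R^d$ — for which a linear-in-time bound on the expected height is already available (or can be reproduced by the branching argument of \cite{MR2865637}). Concretely, I would first observe that by monotonicity in $\cC$ (Section \ref{sec:23}), replacing $\cC = \{0\}$ by $\cC = \R^d$ only increases all heights: $H_{(x,t)} \le \bar H_{(x,t)}$ for every $(x,t)$, where $\bar H$ denotes the full-substrate height process driven by the same data $(\Phi, \{C,\s\})$. Hence $\H^{(S^d_+)}_t \le \sup_{x \in \R^d} \bar H_{(x,t)}$.

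The next step is to control $\sup_{x} \bar H_{(x,t)}$. A hailstone contributes to the heap over the region it reaches only through chains of overlapping footprints arriving before time $t$; following \cite{MR2865637}, one bounds $\bar H_{(x,t)}$ by the total $\s$-mass of a sub-branching process whose offspring counts are governed by the number of Poisson points whose footprints intersect a given footprint, and whose displacements are governed by the diameters $\xi$. The light-tail assumptions \eqref{eq:ltail} — finite exponential moments of $\xi^d$ and of $\s$ — are exactly what makes this branching process sub-critical after a suitable exponential tilting, which yields a bound of the form $\E \exp(c' \bar H_{(x,t)}) \le e^{C t}$ for small $c' > 0$ and some $C < \infty$, uniformly in $x$ by spatial homogeneity. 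To pass from a pointwise exponential-moment bound to a bound on $\E \sup_x$, I would restrict attention to a bounded spatial window first: the footprint of any hailstone contributing to the heap by time $t$ has traveled a distance that is itself light-tailed, so with overwhelming probability the heap at time $t$ is contained in a ball of radius $O(t)$; covering that ball by $O(t^d)$ unit cells and using the exponential moment bound on each cell gives $\E \sup_{|x| \le Rt} \bar H_{(x,t)} \le \frac{1}{c'}\log\big(O(t^d) e^{Ct}\big) = O(t)$, while the complementary event contributes a negligible amount because both the probability of reaching outside the ball and the conditional height there have light tails. Combining, $\E \H^{(S^d_+)}_t \le \E \sup_x \bar H_{(x,t)} \le C'' t$ for all $t$, which is \eqref{eq:suphei}.

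The main obstacle I anticipate is the passage from the pointwise exponential moment bound to the bound on the \emph{supremum} over space, since the supremum is over an uncountable, random-radius set; the care is in showing the heap is spatially confined to an $O(t)$-ball with a tail good enough to survive multiplication by the $e^{Ct}$-type conditional bound, and in making the covering argument uniform. A secondary technical point is that $\H^{(S^d_+)}_t$ is defined through the directional cone $|(x,h)|\Theta$ rather than as a plain spatial supremum of heights; but since $S^d_+$ is the full upper hemisphere, the constraint $(x,h) \in |(x,h)| S^d_+$ is automatically satisfied whenever $h > 0$ and $|(x,h)|>0$, so $\H^{(S^d_+)}_t = \sup\{ h \ge 0 : \exists x,\ H_{(x,t)} \ge h\} = \max(0, \sup_x H_{(x,t)})$, and the reduction to the spatial supremum is immediate. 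If reproducing the branching bound from scratch is undesirable, one can alternatively cite the corresponding growth-rate estimate of \cite{MR2865637} directly and only supply the monotonicity-in-$\cC$ comparison and the finite-window supremum argument.
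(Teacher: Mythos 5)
Your overall engine (discretize, dominate by a branching structure with light‑tailed offspring counts and marks, then Chernoff/exponential tilting) is the same one the paper's Appendix uses, but the specific reduction you propose has a real flaw at its first step. The domination $\H^{(S^d_+)}_t \le \sup_{x\in\R^d}\bar H_{(x,t)}$, with $\bar H$ the full‑substrate ($\cC=\R^d$) height of \cite{MR2865637}, is vacuous: the full‑substrate height field is stationary in $x$ and unbounded in distribution, so its supremum over all of $\R^d$ is a.s.\ infinite for every $t>0$ (arbitrarily long chains of mutually overlapping stones occur somewhere in infinite volume). Monotonicity in $\cC$ only helps \emph{after} restricting to the stick footprint, i.e.\ $\H^{(S^d_+)}_t=\sup_{x\in F_t}H_{(x,t)}\le\sup_{x\in F_t}\bar H_{(x,t)}$, and you must then control where $F_t$ lives. (Your side remark that $\H^{(S^d_+)}_t=\max(0,\sup_x H_{(x,t)})$ is correct and harmless.)

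That is where the more serious gap sits. The confinement of $F_t$ to a ball of radius $O(t)$, with tails strong enough to survive multiplication by an $e^{Ct}$‑type conditional bound, is a statement of exactly the same nature and difficulty as the lemma itself: a point of $F_t$ at distance $R$ from the origin forces a chain of overlapping footprints whose diameters sum to at least $R/2$, so bounding $\P(\diam F_t>Rt)$ is again a ``maximum over lineages of a sum of light‑tailed marks'' problem, with $\xi$ in place of $\s$. In the paper this diameter control (Lemma \ref{lem:upper_diam}) is \emph{derived from} Lemma \ref{lem:L1bound}, so invoking it here is circular unless you establish it independently --- and establishing it independently is precisely the branching‑process computation you were hoping to outsource. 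You also leave the bad event unquantified: $\E\bigl[\H^{(S^d_+)}_t\,1_{\{\diam F_t>Rt\}}\bigr]$ needs a joint tail bound on the pair (diameter, height), which are positively associated since both are driven by long chains; neither Cauchy--Schwarz (no second‑moment bound is available yet) nor the bare assertion that ``both are light‑tailed'' closes this. The paper's proof avoids the whole issue by bounding the maximum over lineages of the branching process directly (partitioning on the population growth rate and applying Chernoff bounds to the lineage sums), which controls spatial spread and height in one pass; I would follow that route rather than separating ``where'' (covering plus union bound) from ``how high'' (pointwise exponential moment in the full‑substrate model).
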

The proof of
Lemma \ref{lem:L1bound} is quite close to that of
Theorem 2 in \cite{MR2865637}. In order to
make the paper self-contained, we 
provide a proof in Appendix. 

\begin{lemma}\label{lemmedebase}
For all $0 \leq t_1 < t_2$ and $x,y \in \R^d$, the stick heap
satisfies the following inequality:
\begin{align*}
H_{(x+y,t_2)} \geq H_{(x,t_1)} + H_{(y,t_2-t_1)}\circ S_{(x,t_1)}.
\end{align*}
\end{lemma}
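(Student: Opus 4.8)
The plan is to exhibit a concrete sub-configuration of hailstones contributing to $H_{(x+y,t_2)}$ and show that the heights built from it already dominate the right-hand side. The key observation is that the stick model with $\cC=\{0\}$ enjoys two forms of monotonicity relative to the initial substrate (Section \ref{sec:23}): enlarging $\cC$ can only raise the heights, and the recursion \eqref{eq:recursionH} is driven locally by the $N^x$ processes together with the sup over intersecting footprints. First I would interpret $H_{(x,t_1)}$ as the height at $x$ of the heap grown from $\cC=\{0\}$ over $[0,t_1)$, and $H_{(y,t_2-t_1)}\circ S_{(x,t_1)}$ as the height at $y$, grown over a time window of length $t_2-t_1$, in the \emph{shifted} environment where the clock starts at $t_1$ and the spatial origin is moved to $x$; concretely, the shifted system is the one started empty at time $t_1$ with substrate $\cC=\{x\}$, driven by the Poisson rain and marks of $\Phi$ restricted to $\R^d\times[t_1,t_2)$.

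The main step is a coupling/domination argument. Consider the original stick heap on $[0,t_2)$. By time $t_1$, the point $x$ supports a column of ice of height $H_{(x,t_1)}$ (this uses $x\in C_{(0,v)}+0$ for suitable early arrivals, or trivially $H_{(x,t_1)}=0$ if no hailstone has yet reached $x$, in which case the inequality is immediate from monotonicity in $t$ and in $\cC$). Now run the system forward on $[t_1,t_2)$. During this window, the heap grown in the shifted system with substrate $\{x\}$ is, by monotonicity in $\cC$, dominated by the heap grown with substrate equal to the \emph{entire footprint present at time $t_1$} in the original system — and in particular, sitting on top of the pre-existing column at $x$ of height $H_{(x,t_1)}$. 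Therefore, if in the shifted system a chain of hailstones carries height from $x$ to $x+y$ producing value $H_{(y,t_2-t_1)}\circ S_{(x,t_1)}$ at the site $x+y$, the same chain of hailstones in the original system produces at $x+y$ a height of at least $H_{(x,t_1)}$ (the base it is stacked upon) plus that increment. The formal way to see this is to iterate \eqref{eq:recursionH}: unrolling the recursion for $H_{(x+y,t_2)}$ over the arrivals in $[t_1,t_2)$ gives a sup over chains, and restricting that sup to chains originating at the column over $x$ yields exactly $H_{(x,t_1)}+H_{(y,t_2-t_1)}\circ S_{(x,t_1)}$.

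The hard part will be making the "restrict the sup to one chain" step rigorous: \eqref{eq:recursionH} is written in integrated form, so one must either unroll it into an explicit maximum over admissible sequences of arrivals (each step respecting footprint intersection and time order) and then drop all but a favorable subset, or, cleaner, set up a direct coupling of the two driven systems on $[t_1,t_2)$ and invoke the monotonicity in $\cC$ together with the fact that the height profile at time $t_1$ dominates the indicator profile $\1\{x\}$. I would favor the coupling route: define $H'$ as the height process on $[t_1,t_2)$ started from the actual profile $H_{(\cdot,t_1)}$ and $H''$ as the one started from $-\infty$ off $\{x\}$ and $H_{(x,t_1)}$ at $x$ (equivalently $H_{(x,t_1)}+$ the shifted stick heap); since $H_{(\cdot,t_1)}\ge H''_{(\cdot,t_1)}$ pointwise, the monotonicity of the recursion in the initial condition gives $H_{(\cdot,t_2)}=H'_{(\cdot,t_2)}\ge H''_{(\cdot,t_2)}$, and evaluating at $x+y$ gives the claim. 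The only remaining care is to confirm that "$H'$ started from $H_{(\cdot,t_1)}$ over $[t_1,t_2)$" reproduces $H_{(\cdot,t_2)}$, which is just the Markov-type consistency of \eqref{eq:recursionH} under splitting the time interval at $t_1$, and that the additive shift by the constant $H_{(x,t_1)}$ commutes with the recursion, which holds because \eqref{eq:recursionH} only ever uses height \emph{differences} $H_{(y,v)}-H_{(x,v)}$ plus the service increment.
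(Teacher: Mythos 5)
Your coupling route is essentially the paper's own proof: the paper defines exactly your $H''$ (called $\tilde H$ there), started at time $t_1$ from $-\infty$ off $x$ and from $H_{(x,t_1)}$ at $x$, invokes monotonicity in the initial condition to get $H_{(x+y,t_2)}\ge \tilde H_{(x+y,t_2)}$, and identifies $\tilde H_{(x+y,t)}$ with $H_{(x,t_1)}+H_{(y,t-t_1)}\circ S_{(x,t_1)}$ by checking that both sides agree at $t=t_1$ and satisfy the same recursion \eqref{eq:recursionH}, which tolerates the additive constant because it only involves height differences. One small slip in your aside: if no hailstone has reached $x\neq 0$ by time $t_1$ then $H_{(x,t_1)}=-\infty$ rather than $0$, but the inequality is then trivial anyway since the right-hand side is $-\infty$.
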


\begin{proof}
For $t \geq t_1$, let $\tilde H_{(z,t)}$ be constructed 
as explained above
with the initial condition
\begin{align*}
\tilde H_{(z,t_1)} := \begin{cases}
H_{x,t_1} &\text{ if $z = x$},\\
-\8 &\text{ if $z \neq x$}.
\end{cases}
\end{align*}
Then, by monotonicity $H_{(x+y,t)} \geq \tilde H_{(x+y,t)}$
and it suffices to show that 
$\tilde H_{(x+y,t)} = H_{(x,t_1)} + H_{(y,t-t_1)}\circ S_{(x,t_1)}$.


If $H(x,t_1)=-\infty$, then $\tilde{H}(z,t_1)=-\infty$ for all $z$,
and it follows that $\tilde{H}(z,t)=-\infty$ for all $z$
and all $t\ge t_1$. If $H(x,t_1)$ is non-negative, then 
the process $\tilde{H}(z,t_1)$ is nothing else
than the process $H(z,t)$ shifted by $H(x,t_1)$ is space and by $t_1$ in time.
So, in both cases, it satisfies the relation 
$\tilde{H}(x+y,t) = H(x,t_1) + H(y,t-t_1) \circ S(x,t_1)$ indeed. 
\end{proof}

\begin{proof}[Proof of Theorem \ref{thm:onesupp}]
Let $X_t \in \R^d$ be such that,
\begin{align*}
(X_t, \H^{(\Theta)}_t) \in |(X_t, \H^{(\Theta)}_t)|\Theta, \quad
H_{(X_t,t)} \geq \H^{(\Theta)}_t .
\end{align*}
The existence of such an $X_t$ is obtained from 
the proof of Corollary 1 in \cite{MR2865637}.

This proof shows that at time $t$, not only the height, but also the diameter of the heap is a.s. finite\footnote{Later on we will also prove an upper bound for this diameter in Lemma \ref{lem:upper_diam}}. Therefore, with probability 1, one can find 
at least one $X_t$ that satisfies the above properties. There could be more than 
one and, in order for $X_t$ to be a random variable (i.e. a measurable function), we may, 
for instance, take the smallest $X_t$ in the lexicographical order.

For $0 \leq t_1 \leq t_2$, let 
\begin{align*}
\H^{(\Theta)}_{t_1,t_2} := \H^{(\Theta)}_{t_1-t_2} \circ S_{(X_{t_1},t_1)}.
\end{align*} 
In order to prove that the limit in the theorem exists and is a.s. constant,
we use the Super-additive Ergodic Theorem of Liggett, see \cite{MR806224}.
We have to verify that the following properties hold:
\begin{enumerate}
\item Super-additivity: For $t_2 > t_1 \geq 0$
\begin{align*}
\H^{(\Theta)}_{t_2} \geq \H^{(\Theta)}_{t_1} + \H^{(\Theta)}_{t_1,t_2}.
\end{align*}
\item For $t_2 > t_1 \geq 0$, the joint distribution of $\{\H^{(\Theta)}_{t_2,t_2+k}, k > 0\}$ is
the same as that of $\{\H^{(\Theta)}_{t_1,t_1+k}, k > 0\}$.
\item For $k > 0$, $\{\H^{(\Theta)}_{nk,(n+1)k}, n > 0\}$ is a stationary process.
\item Bound for the expectation:
\begin{align*}
\sup_{t>0} \frac{\E \H^{(\Theta)}_t}{t} < \8.
\end{align*}
\end{enumerate}
To prove (1), let $t_2 > t_1 \geq 0$ be fixed and let
\begin{align*}
V = \{(x,h) \in \R^d\times(0,\8): (x,h) \in |(x,h)|\Theta, h \leq H_{(x,t_2-t_1)}\circ S_{(X_{t_1},t_1)}\}
\end{align*}
For $(x,h) \in V$ we have by the convexity of $\Theta$ that,
\begin{align}\label{eq:1}
(X_{t_1}+x,\H^{(\Theta)}_{t_1}+h) \in |(X_{t_1}+x,\H^{(\Theta)}_{t_1}+h)|\Theta.
\end{align}
Moreover,
\begin{align}\label{eq:2}
\H^{(\Theta)}_{t_1}+h &\leq H_{(X_{t_1},t_1)} + H_{(x,t_2-t_1)}\circ S_{(X_{t_1},t_1)},\\
\nonumber &\leq H_{(X_{t_1}+x,t_2)},
\end{align}
where we used Lemma \ref{lemmedebase} in the last inequality.
By combining \eqref{eq:1} and \eqref{eq:2} we get that $\H^{(\Theta)}_{t_2} \geq \H^{(\Theta)}_{t_1} + h$, which implies the super-additive inequality after taking the supremum of $h$ over $(x,h) \in V$.

To prove (2) we go back to the definition of $\H^{(\Theta)}_{t_i,t_i+k}$ for $i = 1,2$,
\begin{align*}
\{\H^{(\Theta)}_{t_i,t_i+k}, k > 0\} &= \{\H^{(\Theta)}_k\circ S_{(X_{t_i},t_i)}, k > 0\}.
\end{align*}
By Lemma \ref{lem:aux} we get that both families of random variables have the same joint distribution as $\{\H_k^{(\Theta)}, k>0\}$.

To prove (3) it is enough to check that, for $k > 0$ fixed,
the random variables $\{\H^{(\Theta)}_{nk,(n+1)k}, n > 0\}$
are identically distributed and independent. By definition,
\begin{align*}
\H^{(\Theta)}_{nk,(n+1)k} = \H^{(\Theta)}_k \circ S_{(X_{nk},nk)}.
\end{align*}
Using Lemma \ref{lem:aux} once again, we get that $\H^{(\Theta)}_{nk,(n+1)k}$
is distributed as $\H^{(\Theta)}_k$. Then the independence property follows
again from Lemma \ref{lem:aux}.

Finally, (4) results from the upper bound given by Lemma \ref{lem:L1bound}.
\end{proof}

\subsection{Spatial Projection}

\begin{definition}
For $t\geq 0$, let $F_t$ be the spatial projection of
the heap, namely the  RACS of $\R^d$ which is
the union of all the RACS added to the heap up to time $t$:
\begin{align*}
F_t := \{x\in \R^d: H_{(x,t)} \geq 0\}.
\end{align*}
\end{definition}
If the sets $C(x,t)$ are a.s. connected, so is $F_t$. 
However, if the sets $C(x,t)$ are a.s. convex, $F_t$ has no reason to be convex.

In general, $F_t$ is not necessarily a RACS. However
under the light-tailedness assumptions (\ref{eq:ltail}):
\begin{lemma}\label{lem:upper_diam}
For all finite $t$, $F_t$ is a RACS and
\begin{align}
\sup_{t>0} \frac{\E\1\diam(F_t)\2}{t} < \8.
\label{eq:supdiam}
\end{align} 
\end{lemma}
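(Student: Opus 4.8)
The plan is to prove the two assertions separately. That $F_t$ is a RACS will follow from a local--finiteness argument needing only $\E(\xi^d)<\8$, which is a consequence of \eqref{eq:ltail}; the quantitative bound \eqref{eq:supdiam} will follow from an exponential tail estimate for $\diam(F_t)$ whose rate degrades only linearly in $t$, obtained by a first--moment (Mecke) computation over causal chains of hailstones.

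\emph{$F_t$ is a RACS.} Once the first arrival whose footprint contains the origin has occurred, $F_t$ equals $\{0\}$ together with the union of the footprints of all hailstones added to the heap by time $t$; each footprint is closed, and a.s.\ compact since $\diam(F_t)<\8$ a.s.\ (this last fact was already recorded in the proof of Theorem~\ref{thm:onesupp}). Because $\E(\xi^d)<\8$, only finitely many arrivals in $\R^d\times[0,t]$ have footprint meeting any given bounded set, so this union is locally finite, hence closed, and $\omega\mapsto F_t(\omega)$ is measurable by the explicit construction. Thus $F_t$ is a RACS.

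\emph{The bound \eqref{eq:supdiam}.} I would use the following causal description: since $0\in F_t$, one has $\diam(F_t)\le 2\sup_{x\in F_t}|x|$, and for $x\neq 0$, $x\in F_t$ if and only if there is a chain of arrivals $(x_1,v_1),\dots,(x_m,v_m)$ of the Poisson rain with $0<v_1<\cdots<v_m\le t$, such that the footprint of $(x_1,v_1)$ contains the origin, the footprint of $(x_{j+1},v_{j+1})$ meets that of $(x_j,v_j)$ for $1\le j\le m-1$, and $x$ belongs to the footprint of $(x_m,v_m)$. Necessity follows by tracing the ancestors of a hailstone covering $x$, whose arrival times are strictly decreasing down to a root that hits the origin; sufficiency follows by an immediate induction showing that each hailstone of such a chain is indeed added to the heap. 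Hence $\P(\diam(F_t)\ge R)$ is at most the expected number of such chains whose last footprint reaches distance $\ge R/2$ from the origin, which the multivariate Mecke/Campbell formula expresses as a sum over $m$ of $\lambda^m$ times an integral over positions, marks and times. The integral over the ordered simplex $\{0<v_1<\cdots<v_m\le t\}$ contributes the factor $t^m/m!$; this is precisely where the temporal ordering is used, and without it the first--moment bound would only be exponential in $t$, reflecting that the union of all footprints present at time $t$ does percolate once $\lambda t$ is large. The spatial--and--mark integral, carried out one position at a time along the chain (the first step costing a volume $\le\omega_d\xi_1^d$ and each subsequent overlap a volume $\le\omega_d(\xi_j+\xi_{j+1})^d$), is bounded by $A^m\,\E\big(\prod_{i=1}^m(1+\xi_i)^{2d}\,\mathbf 1(\xi_1+\cdots+\xi_m\ge\kappa R)\big)$ for constants $A,\kappa>0$, because reaching distance $R$ forces $\xi_1+\cdots+\xi_m\ge\kappa R$ uniformly over the admissible positions. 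Absorbing the polynomial weights into $e^{(c/2)\xi^d}$ and peeling off the constraint by $\mathbf 1(\xi_1+\cdots+\xi_m\ge\kappa R)\le e^{-s\kappa R}e^{s(\xi_1+\cdots+\xi_m)}$ with $s>0$ small enough that $\rho:=\E\,e^{(c/2)\xi^d+s\xi}<\8$ (possible by \eqref{eq:ltail}), the mark expectation factorizes over $i$ and is at most $(A'\rho)^m e^{-s\kappa R}$. Summing $\sum_{m\ge 1}\frac{(\lambda t)^m}{m!}(A'\rho)^m e^{-s\kappa R}$ gives $\P(\diam(F_t)\ge R)\le e^{Kt-\delta R}$ for constants $K,\delta>0$, and for $t\le 1$ the bound $(\lambda t)^m\le \lambda t\,\lambda^{m-1}$ improves this to $\P(\diam(F_t)\ge R)\le C\,t\,e^{-\delta R}$. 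Integrating $\P(\diam(F_t)\ge R)$ in $R$ against these two bounds yields $\E(\diam(F_t))\le C' t$ uniformly in $t>0$, which is \eqref{eq:supdiam}.

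The step I expect to be the main obstacle is the bookkeeping of the spatial--and--mark integral: one has to carry out the position--by--position integration so that each overlap costs only $\omega_d(\xi_j+\xi_{j+1})^d$, check that the event ``the chain reaches distance $R$'' lies inside $\{\xi_1+\cdots+\xi_m\ge\kappa R\}$ uniformly over admissible positions, and verify that $\prod_j(\xi_j+\xi_{j+1})^d$ is dominated by $A^m\prod_i(1+\xi_i)^{2d}$ with each $\xi_i$ appearing to bounded power, so that the single--hailstone exponential moment $\E\,e^{(c/2)\xi^d+s\xi}$ (finite by \eqref{eq:ltail}) factorizes over the chain. The remaining estimates, including the separate treatment of small $t$, are routine.
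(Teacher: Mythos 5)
Your proof is correct, but it follows a genuinely different route from the paper's. The paper proves \eqref{eq:supdiam} by a coupling reduction: since $F_t$ does not depend on the service times, one may set $\sigma_{(x,t)}=\xi_{(x,t)}=\diam(C_{(x,t)})$, so that the height accumulated along the critical chain of a point $x\in F_t$ equals the sum of the diameters of the stones in that chain; the same telescoping estimate you use ($|x|\le 2\sum_i\xi_i$) then gives $\diam(F_t)\le 4\sup_x H_{(x,t)}$, and Lemma~\ref{lem:L1bound} (itself proved in the Appendix by domination with a branching process) finishes the argument. The RACS property is likewise extracted from that branching process (finitely many offspring per generation, hence finitely many contributing stones). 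You instead bound $\P(\diam(F_t)\ge R)$ directly by the expected number of time-ordered causal chains via the multivariate Mecke formula, exploiting the $t^m/m!$ factor from the ordered time simplex and a Chernoff bound on $\sum_i\xi_i\ge\kappa R$ enabled by \eqref{eq:ltail}; this is self-contained (no appeal to Lemma~\ref{lem:L1bound} or to the branching construction) and yields the strictly stronger conclusion $\P(\diam(F_t)\ge R)\le e^{Kt-\delta R}$ (and $\le Cte^{-\delta R}$ for small $t$), from which \eqref{eq:supdiam} follows by integrating the tail. The price is the spatial bookkeeping you flag, which does go through as you describe (overlap volumes $\le\omega_d(\xi_j+\xi_{j+1})^d$, domination by $A^m\prod_i(1+\xi_i)^{2d}$, and finiteness of $\E\,e^{(c/2)\xi^d+s\xi}$ since $\xi\le 1+\xi^d$). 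The two arguments share the key geometric observation that a connecting chain forces $|x|\le 2\sum_i\xi_i$; the paper's version is shorter given Lemma~\ref{lem:L1bound}, while yours trades that dependence for an explicit exponential tail estimate and a more elementary treatment of the RACS property via local finiteness.
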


\begin{proof}
The proof is an application of Lemma \ref{lem:L1bound}, which follows the ideas in the proof of Corollary 1 in \cite{MR2865637}.

$F_t$ is a RACS as a consequence of the upper bound branching process constructed for $F_t$ in the proof of Lemma \ref{lem:L1bound}. This branching process has a.s. finitely many offspring in each generation. This implies that all finite $t>0$, only a finite number of RACS $C_{(x,s)}$ may contribute to $F_t$.

We now prove (\ref{eq:supdiam}). First notice that the set $F_t$ does not depend on the heights.
However we will make use of them in the following way. Assume $\s_{(x,t)} = \xi_{(x,t)} = \diam(C_{(x,t)})$.
%
We now show that under this assumption,
$$4\sup_{x \in \R^d} H_{(x,t)} \geq \diam(F_t).$$
For every $x \in \R^d$ such that $H_{(x,t)} \geq 0$, there exists an integer $n$ and
some set of points $(x_1,t_1), \ldots, (x_n,t_n) \in \R^d \times [0,t)$ such that:
\begin{enumerate}
\item $(x_i,t_i) \in \supp \Phi$ for $i = 1, \ldots, n$;
\item $0 \leq t_i < t_{i+1} < t$ for $i = 1, \ldots, (n-1)$;
\item $x \in x_n+C_{(x_n,t_n)}$ and $H_{(x,s)} = H_{(x, t_n)}$ for $s \in [t_n,t)$;
\item for $i = 1, \ldots, (n-1)$,
there exists $y_i \in x_{i+1}+C_{(x_{i+1},t_{i+1})} \cap x_i+ C_{(x_i,t_i)}$ such that
$H_{(y_i,s)} = H_{(y_i,t_i)}$ for $s \in [t_i,t_{t+1})$; 
\item $0 \in x_1+ C_{(x_1,t_1)}$ and $H_{(0,s)} = 0$ for $s \in [0,t_1)$.
\end{enumerate}
Therefore,
\begin{align*}
|x| \leq |x-x_n| + \sum_{i=1}^{n-1}|x_{i+1} - x_i| + |x_1|
\leq 2\sum_{i=1}^n \diam(C_{(x_i,t_i)})
= 2H_{(x,t)}.
\end{align*}
Maximizing over $\{x \in \R^d:H_{(x,t)} \geq 0\}$ and 
applying Lemma \ref{lem:L1bound} concludes the proof.
\end{proof}

\begin{definition}
Given a direction $v \in S^{d-1}$ and a closed set $A \ss \R^d$,
containing the origin, let
\begin{align*}
D^{(A,v)}_t := \inf \3r \in [0,\8): (A+rv) \cap F_t = \emptyset\4.
\end{align*}
where the infimum of an empty set is $\infty$.
\end{definition}
%
%
%
%

Here are a few examples: If $A = \{0\}$ then $D^{(A,v)}_t$ can be interpreted as the
\textit{internal growth of $F_t$ in the $v$ direction at time $t$}. It is also the contact distance with free space in the $v$-direction. Other interesting cases arise when $A = \{x \in \R^d : x \cdot v \geq 0\}$ or $A = \{x \in \R^d: x = \a v, \a \geq 0\}$; then $D^{(A,v)}_t$ can be interpreted as the
\textit{external growth of $F_t$ in the $v$ direction}. These cases are covered in Theorem \ref{thm:hor_growth} and illustrated in Figure \ref{fig-gau}.

\begin{figure}
 \begin{center}
  \includegraphics[width=10cm]{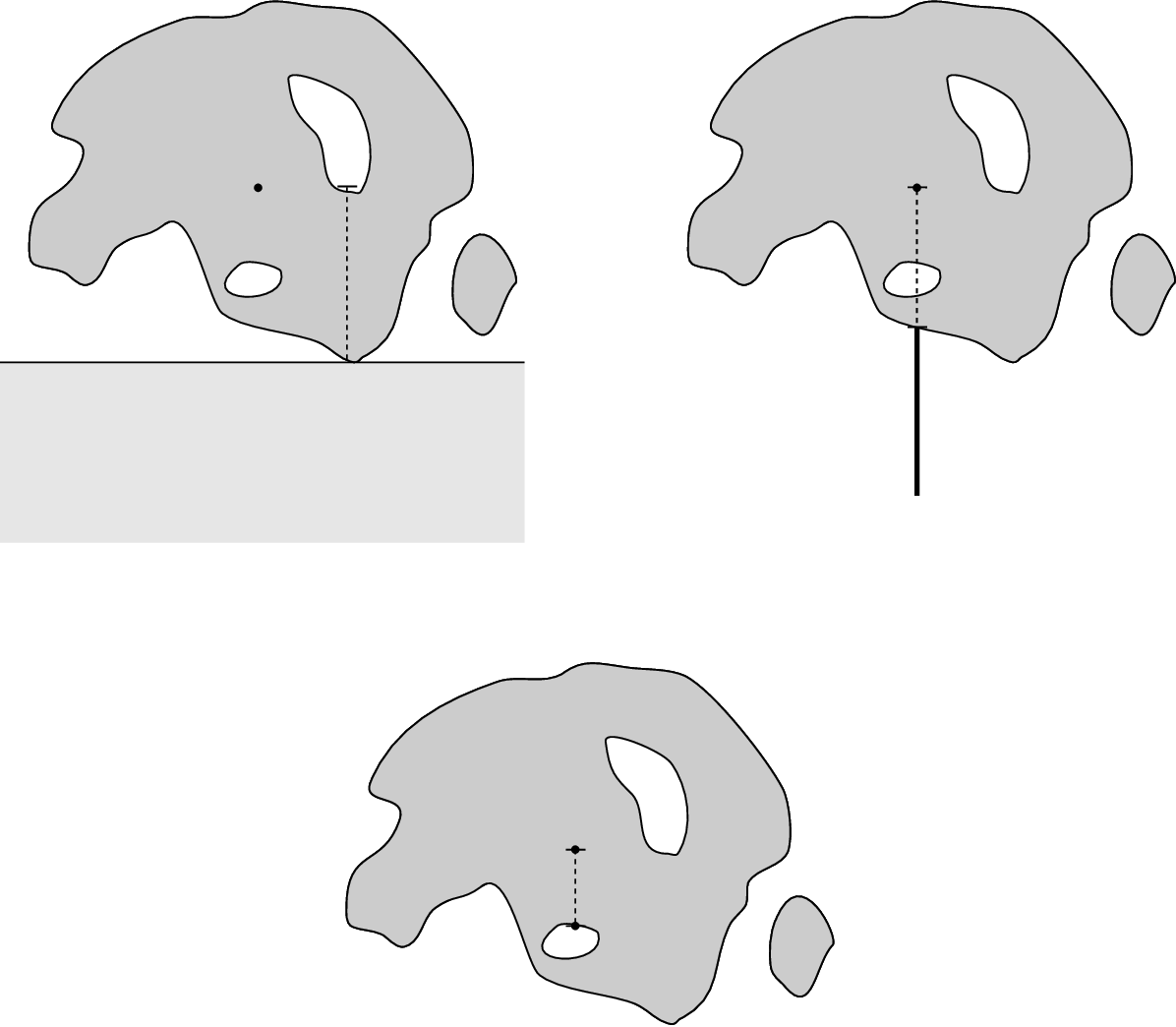}
 \end{center}
 \caption{Different set-gauges measuring the spatial growth of $F_t$.
The direction of $v$ is South. The top-left case is
 $A= \{x \in \R^d : x \cdot v \geq 0\}$; the top-right case is $A = \{x \in \R^d: x = \a v, \a \geq 0\}$; the
bottom case is $A = \{0\}$.}
\label{fig-gau}
\end{figure}

\begin{definition}
The pair $(v,A)$, where
$v \in S^{d-1}$ is a direction and $A \ss \R^d$ a closed set, forms a {\em set-gauge} if
\begin{enumerate}
\item $A$ contains the origin and for every $a \in A$, $A+a \ss A$,
\item $-v$ does not belong to the closed convex hull of $A$.
\end{enumerate}
\end{definition}

The three above examples are set-gauges. Here are other
examples:

If $A$ is a closed convex cone of $\R^d$, different from $\R^d$,
and $-v \notin A$, then $(v,A)$ forms a set-gauge.

If $(v,A)$ forms a set-gauge, then $(v,B)$, where 
$B := \bigcup_{r>0} (A+rv)$ also forms a set-gauge. In this case 
\begin{align*}
D^{(B,v)}_t = \sup \3r \in [0,\8): (A+rv) \cap F_t \neq \emptyset\4.
\end{align*}

Note that for all set-gauges $(v,A)$, $D^{(A,v)}_t$ is a.s. finite. This follows from the property that $F_t$ is a.s. compact and the assumption that $-v$ does not belong to the convex hull of $A$.

Our main result is:
\begin{theorem}\label{thm:hor_growth}
Given a direction $v \in S^{d-1}$ and a closed set $A \ss \R^d$, such that 
$(v,A)$ forms a set-gauge, there exists a non negative constant
$\phi= \phi^{A,v}$ such that
\begin{align*}
\lim_{t\to\8}\frac{D^{(A,v)}_t}{t}
= \lim_{t\to\8}\frac{\E D^{(A,v)}_t}{t}
= \sup_{t>0} \frac{\E D^{(A,v)}_t}{t}=: \phi < \8,
\end{align*}
where the first limit is both a.s. and in $L_1$.
\end{theorem}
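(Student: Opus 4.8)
The plan is to mimic the proof of Theorem~\ref{thm:onesupp}, replacing the height functional $\H^{(\Theta)}_t$ by the set-gauge functional $D^{(A,v)}_t$ and verifying the hypotheses of Liggett's Super-Additive Ergodic Theorem. First I would fix a point $X_t \in F_t$ that realizes the external reach in the sense that $(A + D^{(A,v)}_t v)$ touches $F_t$ at $X_t$, i.e. $X_t \in (A + D^{(A,v)}_t v)\cap F_t$; since $F_t$ is a.s. compact (Lemma~\ref{lem:upper_diam}) and $A$ is closed with $-v$ outside its convex hull, such a touching point exists a.s., and I would select it measurably by a lexicographic tie-break, exactly as for $X_t$ in the previous proof. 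Then I set $D^{(A,v)}_{t_1,t_2} := D^{(A,v)}_{t_2-t_1}\circ S_{(X_{t_1},t_1)}$ and aim to check: (1) super-additivity $D^{(A,v)}_{t_2}\ge D^{(A,v)}_{t_1}+D^{(A,v)}_{t_1,t_2}$; (2) stationarity of the increment family in the sense required; (3) stationarity of $\{D^{(A,v)}_{nk,(n+1)k}\}_n$; and (4) the uniform bound $\sup_{t>0}\E D^{(A,v)}_t/t<\8$.

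Points (2) and (3) are immediate from Lemma~\ref{lem:aux}, since $X_{t_1}$ is measurable with respect to the Poisson data on $\R^d\times(0,t_1]$ and $D^{(A,v)}_{k}\circ S_{(X_{t_1},t_1)}$ therefore has the law of $D^{(A,v)}_k$ and is independent of the past; this is a verbatim repeat of the argument for $\H^{(\Theta)}$. Point (4) follows from Lemma~\ref{lem:upper_diam}: since $A$ contains the origin, $F_t\neq\emptyset$ forces $A\cap F_t\neq\emptyset$, and because $-v\notin\mathrm{conv}(A)$ there is a constant $c_{A,v}>0$ with $D^{(A,v)}_t\le c_{A,v}\operatorname{diam}(F_t)+\mathrm{const}$ (the separating-hyperplane estimate: the component of any point of $F_t$ in the direction separating $A$ from $-v$ is bounded by the diameter, which caps how far $A+rv$ can be pushed before leaving $F_t$). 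Hence $\sup_{t>0}\E D^{(A,v)}_t/t<\8$ by \eqref{eq:supdiam}. One then invokes Liggett's theorem to obtain the a.s. and $L_1$ limit equal to $\sup_{t>0}\E D^{(A,v)}_t/t=:\phi$, and non-negativity is clear since $D^{(A,v)}_t\ge 0$.

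The crux is the super-additivity in (1), and here the set-gauge axioms are used essentially. Given $t_2>t_1\ge 0$, let $\tilde F$ denote the spatial projection at time $t_2-t_1$ of the heap driven by $S_{(X_{t_1},t_1)}$; by the translated version of Lemma~\ref{lemmedebase} (applied to the set $F_t$ rather than the height, which is legitimate because $H_{(x+y,t_2)}\ge 0$ whenever $H_{(y,t_2-t_1)}\circ S_{(X_{t_1},t_1)}\ge 0$), we have $X_{t_1}+\tilde F\ss F_{t_2}$. Now $X_{t_1}$ realizes reach $D^{(A,v)}_{t_1}$, so $X_{t_1}\in A+D^{(A,v)}_{t_1}v$, i.e. $X_{t_1}-D^{(A,v)}_{t_1}v\in A$; and $\tilde F$ reaches out to $D^{(A,v)}_{t_1,t_2}$ relative to its own translate of $A$, giving a point $w\in \tilde F$ with $w - D^{(A,v)}_{t_1,t_2}v\in A$. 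Then $z:=X_{t_1}+w\in F_{t_2}$ and
\begin{align*}
z - (D^{(A,v)}_{t_1}+D^{(A,v)}_{t_1,t_2})v
= (X_{t_1}-D^{(A,v)}_{t_1}v) + (w - D^{(A,v)}_{t_1,t_2}v) \in A + A \ss A,
\end{align*}
using the semigroup property $A+a\ss A$ for $a\in A$. Hence $(A+(D^{(A,v)}_{t_1}+D^{(A,v)}_{t_1,t_2})v)\cap F_{t_2}\ni z$, which by the definition of $D^{(A,v)}_{t_2}$ as the supremum (or via the $B$-reformulation of the set-gauge) yields $D^{(A,v)}_{t_2}\ge D^{(A,v)}_{t_1}+D^{(A,v)}_{t_1,t_2}$. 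The main obstacle I anticipate is making this additivity of reaches fully rigorous when $D^{(A,v)}_t$ is defined as an \emph{infimum} of $r$ with $(A+rv)\cap F_t=\emptyset$ rather than a supremum over nonempty intersection: one must pass to the ``filled'' gauge $B=\bigcup_{r>0}(A+rv)$, for which $D^{(B,v)}_t=\sup\{r:(A+rv)\cap F_t\neq\emptyset\}$ and $D^{(B,v)}_t=D^{(A,v)}_t$ (monotonicity of $A+rv$ in $r$ along $v$), so that the touching point $X_t$ and the inequality above are unambiguous; the rest is bookkeeping parallel to the proof of Theorem~\ref{thm:onesupp}.
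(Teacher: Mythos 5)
Your proposal follows the paper's proof essentially step for step: the touching point $X_t\in (A+D^{(A,v)}_t v)\cap F_t$ with a measurable tie-break, the increments $D^{(A,v)}_{t_1,t_2}=D^{(A,v)}_{t_2-t_1}\circ S_{(X_{t_1},t_1)}$, Lemma \ref{lem:aux} for the stationarity and independence hypotheses of Liggett's theorem, the separating-hyperplane bound $D^{(A,v)}_t\leq \diam(F_t)/(v\cdot w)$ combined with Lemma \ref{lem:upper_diam} for the expectation bound, and the semigroup property $A+a\ss A$ to propagate membership in $A$ in the super-additivity step.

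The one place where your write-up does not close is the bridge between your super-additivity computation and the definition of $D^{(A,v)}_{t_2}$ as an \emph{infimum}. Your displayed argument exhibits a point of $F_{t_2}$ in $A+r_0v$ only at the single value $r_0=D^{(A,v)}_{t_1}+D^{(A,v)}_{t_1,t_2}$, which by itself does not bound the infimum from below. The patch you propose --- replacing $A$ by $B=\bigcup_{r>0}(A+rv)$ and asserting $D^{(B,v)}_t=D^{(A,v)}_t$ by ``monotonicity of $A+rv$ in $r$ along $v$'' --- does not work: the set-gauge axioms give $A+a\ss A$ for $a\in A$ but not $A+rv\ss A$ for $r>0$, so the set of $r$ with $(A+rv)\cap F_t\neq\emptyset$ need not be an interval, and the paper's own basic example $A=\{0\}$ already shows that the inf-gauge and the sup-gauge are genuinely different functionals (internal versus external growth). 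The correct fix is simply to run your own computation at every $r=D^{(A,v)}_{t_1}+r'$ with $0\leq r'<D^{(A,v)}_{t_1,t_2}$, taking $w\in (A+r'v)\cap \tilde F$ (nonempty because $r'$ lies strictly below the infimum defining $D^{(A,v)}_{t_1,t_2}$), and to use monotonicity of $F_t$ in $t$ for $r<D^{(A,v)}_{t_1}$; this yields $(A+rv)\cap F_{t_2}\neq\emptyset$ for all $r<r_0$ and hence $D^{(A,v)}_{t_2}\geq r_0$. That is precisely how the paper argues, and with this one-line change your proof coincides with the paper's.
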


\begin{proof}
Once again the proof relies on the distributional Super-additive Ergodic Theorem. Let $X_t \in \R^d$ be a random variable such that,
\begin{align*}
X_t \in (A+D^{(A,v)}_t v) \cap F_t.
\end{align*}
The existence of a finite $X_t$ satisfying this relation follows from the fact that $F_t$ is compact and $A$ is closed. It also uses the fact that $-v$ does not belong to the convex hull of $A$. There is no reason to have uniqueness. However, we can use the same construction as in the proof of Theorem \ref{thm:onesupp} to cope with multiple solutions.

For $0 \leq t_1 \leq t_2$, let 
\begin{align*}
D^{(A,v)}_{t_1,t_2} := D^{(A,v)}_{t_2-t_1}\circ S_{X_{t_1},t_1}.
\end{align*}
By Lemma \ref{lem:aux},
properties analogous to properties (2) and (3)
in the proof of Theorem \ref{thm:onesupp} do hold.
We now prove the super-additivity
and the boundedness of the expectations.

In order to prove the super-additive inequality,
it is enough to show that,
for every $r < D^{(A,v)}_{t_1} + D^{(A,v)}_{t_1,t_2}$,
\begin{align}\label{eq:gauge}
\1A+rv\2 \cap F_{t_2} \neq \emptyset.
\end{align}
If $r < D^{(A,v)}_{t_1}$, this follows from the monotonicity of $F_t$
w.r.t. time and from the definition of $D^{(A,v)}_{t_1}$.
Let now $r = D^{(A,v)}_{t_1} + r'$, with
$r' \in \left[0,D^{(A,v)}_{t_1,t_2}\right)$.
From the definition of $F_t$,
\begin{align*}
F_{t_2-t_1}\circ S_{X_{t_1},t_1} + X_{t_1} \ss F_{t_2}.
\end{align*}
From the definition of a set-gauge and the
property $X_{t_1} \in A + D^{(A,v)}_{t_1}v$,
\begin{align*}
A + X_{t_1} + r'v \ss A+rv.
\end{align*}
From the definition of $D^{(A,v)}_{t_1,t_2}$, for $r' < D^{(A,v)}_{t_1,t_2}$,
\begin{align*}
\1A + r'v\2 \cap \1 F_{t_2-t_1}\circ S_{X_{t_1},t_1}\2 \neq \emptyset,
\end{align*}
which implies
\begin{align*}
\1A +  X_{t_1}+ r'v\2 \cap \1 F_{t_2-t_1}\circ S_{X_{t_1},t_1}
+  X_{t_1}\2 \neq \emptyset
\end{align*}
and \eqref{eq:gauge} follows from the last two inclusions.

Now we prove boundedness of expectations. Given that $A$ and $v$ form a gauge there exists a hyperplane given by $P = \{x \in \R^d: x\cdot w = 0\}$, with $w \in S^{d-1}$, that separates $-v$ and $A$, i.e.
\begin{enumerate}
\item $v\cdot w > 0$,
\item $a \cdot w \geq 0$ for every $a \in A$.
\end{enumerate}
Then, letting $A' = \{x \in \R^d: x\cdot w \geq 0\}$, and using the monotonicity inherited from the fact that $A \ss A'$,
\begin{align*}
D^{(A,v)}_t \leq D^{(A',w)}_t \leq \frac{\diam(F_t)}{|v\cdot w|}.
\end{align*}
Finally, applying Lemma \ref{lem:upper_diam}  we get 
\begin{align*}
\limsup_{t\to\8} \frac{\E D^{(A,v)}_t} t 
\leq \frac{
\limsup_{t\to\8}\E\diam(F_t)/t
}{|v\cdot w|}
< \8.
\end{align*}
So the proof is complete.
\end{proof}

Now we focus on the gauges with $A = \{0\}$.
Our aim is to prove that,
under an extra assumption on the RACS,
\begin{align*}
\lim_{t\to\8} \frac{D^{(A,v)}_t}{t} > 0.
\end{align*}
This will in turn imply that, for every $x \in \R^d$,
the time that it takes for $F_t$ 
to hit $x$ is a.s. finite.

\begin{lemma}\label{lem:hit_one}
Assume that the intensity of $\Phi$ is positive and that, with a positive probability,
the footprint has a non-empty interior that contains the origin. Then, for $A = \{0\}$ and $v \in S^{d-1}$, we have 
\begin{align*}
\lim_{t\to\8} \frac{D^{(A,v)}_t}{t} > 0.
\end{align*}
\end{lemma}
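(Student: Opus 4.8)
The plan is to show that $D^{(A,v)}_t$ with $A=\{0\}$ grows at least linearly by exhibiting an explicit lower bound: with positive probability the heap makes a ``step'' of fixed positive length in the $v$-direction in a fixed amount of time, and then iterating this step along an i.i.d.\ structure and invoking the superadditive ergodic theorem (Theorem~\ref{thm:hor_growth} already gives the existence of the limit $\phi=\phi^{A,v}$, so it only remains to prove $\phi>0$). Concretely, fix a constant $\rho>0$ and a time horizon $\tau>0$ such that the event
\[
E:=\bigl\{\,(\{0\}+\rho v)\cap F_\tau\neq\emptyset\,\bigr\}
\]
has positive probability $p:=\P(E)>0$; this is where Assumption~I enters. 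I would argue $p>0$ as follows: with positive probability there is a point $(x_1,t_1)\in\Phi$ with $t_1<\tau$, with $\sigma_{(x_1,t_1)}$ small (so its height never matters), with the origin lying in the interior of $C_{(x_1,t_1)}+x_1$ (so it attaches to the stick), and with $C_{(x_1,t_1)}+x_1$ having nonempty interior containing a neighborhood of $\rho v$ — e.g.\ by taking $x_1$ close to $\rho v$ and using that $C$ has nonempty interior containing $0$ on a positive-probability event. On that event $\rho v\in F_\tau$, hence $D^{(\{0\},v)}_\tau\geq\rho$. (One may even take a chain of two or more hailstones if a single RACS is too small to bridge from $0$ to $\rho v$; the diameter is a.s.\ finite but there is no a priori positive lower bound, so a short chain of $C$'s, each attaching to the previous, is the robust construction.)

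Next, using the superadditive increments $D^{(A,v)}_{t_1,t_2}=D^{(A,v)}_{t_2-t_1}\circ S_{X_{t_1},t_1}$ from the proof of Theorem~\ref{thm:hor_growth}, together with Lemma~\ref{lem:aux} (which makes the increments over disjoint time windows i.i.d.\ and distributed as $D^{(A,v)}_{\tau}$), I get for every integer $n$
\[
D^{(A,v)}_{n\tau}\;\geq\;\sum_{k=0}^{n-1} D^{(A,v)}_{k\tau,(k+1)\tau},
\]
a sum of i.i.d.\ nonnegative random variables each bounded below by $\rho\,\mathbf 1_{E_k}$ with $\P(E_k)=p$. By the strong law of large numbers, $\liminf_{n\to\8} D^{(A,v)}_{n\tau}/(n\tau)\geq \rho p/\tau>0$ almost surely, and since $D^{(A,v)}_t/t\to\phi$ by Theorem~\ref{thm:hor_growth}, this forces $\phi\geq \rho p/\tau>0$. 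The claimed consequence — that for every $x\in\R^d$ the hitting time $\inf\{t:x\in F_t\}$ is a.s.\ finite — then follows because $D^{(A,v)}_t/|x|\to\phi>0$ along $v=x/|x|$ shows $x\in A+D^{(A,v)}_t v$ eventually, i.e.\ $x\in F_t$ for large $t$.

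The main obstacle is the positive-probability step estimate, i.e.\ proving $p=\P(E)>0$ cleanly. The subtlety is that $C$ has nonempty interior only with positive (not full) probability and with no uniform lower bound on its inradius, so a single hailstone need not reach $\rho v$ from the origin; one must either choose $\rho$ adapted to a favorable event for $C$, or build a chain of a random but a.s.\ finite number of overlapping RACS from $0$ to $\rho v$ and show the whole configuration has positive probability in a bounded space-time window — which is where one combines the nonempty-interior event with the positivity of the Poisson intensity to place the needed points. Once this geometric lemma is in hand, the rest is the routine SLLN/superadditivity packaging described above.
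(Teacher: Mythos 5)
Your overall strategy is the same as the paper's: Theorem~\ref{thm:hor_growth} identifies the limit with $\sup_{t>0}\mathbb{E}\,D^{(A,v)}_t/t$, so the whole lemma reduces to exhibiting one fixed time $\tau$ with $\mathbb{E}\,D^{(A,v)}_\tau>0$. (Your SLLN/superadditivity packaging is therefore superfluous: once $\mathbb{P}(E)=p>0$ is known, the theorem gives $\phi\ge \mathbb{E}\,D^{(A,v)}_\tau/\tau\ge \rho p/\tau$ directly, with no need to iterate increments.) The problem is that the one step you explicitly leave open, namely $p>0$, is the essential content of the lemma; as written the argument is incomplete, and your hesitation between ``choose $\rho$ adapted to a favorable event for $C$'' and ``build a chain of hailstones'' is precisely the point where a proof, rather than a plan, is required.

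The gap closes in two lines, along the first of your two options, which is what the paper does. Since $\{0\in\operatorname{int}(C)\}=\bigcup_{n\ge1}\{B_{1/n}\subseteq C\}$ is an increasing union, continuity of probability yields a deterministic $r>0$ with $\mathbb{P}(B_r\subseteq C)>0$. By independent thinning, keep only the Poisson points whose RACS contains $B_r$; this is still a homogeneous Poisson rain of positive intensity, and by monotonicity in the $C$'s one may replace each retained RACS by $B_r$ itself. Now take $\tau=1$ and $\rho=r/2$: if some retained point $(x_1,t_1)$ has $x_1\in B_{r/2}$ and $t_1\in(0,1]$, then the ball $B_r(x_1)$ contains $0$ and, by convexity (since $|\tfrac{r}{2}v-x_1|\le r$), the whole segment from $0$ to $\tfrac{r}{2}v$; so it attaches to the stick at the origin and $D^{(\{0\},v)}_1\ge r/2$. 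Hence $p\ge \mathbb{P}\bigl(\Phi'(B_{r/2}\times(0,1])>0\bigr)>0$, where $\Phi'$ denotes the thinned process. The chain-of-hailstones alternative is unnecessary and would require extra care to guarantee that consecutive stones actually overlap.
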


%
\begin{proof}
By Theorem \ref{thm:hor_growth},
\begin{align*}
\lim_{t\to\8} \frac{D^{(A,v)}_t}{t} &= \sup_{t>0} \frac{\E D^{(A,v)}_t}{t} \geq \E D^{(A,v)}_1.
\end{align*}
From the lemma conditions, there is a positive $r$ such that, with positive
probability, $C$ contains the ball $B_r$ with radius $r$ centered at the origin.
By the thinning property of the Poisson point process,
we may consider only the Poisson
rain (with a smaller, but positive intensity) with RACS
that include $B_r$. Then, using
the monotonicity mentioned earlier, we may take $C=B_r$.
In the latter case, it is not difficult to see that 
\begin{align*}
\P(D^{(A,v)}_1 > r/2) \geq \P(\Phi(B_{r/2}\times(0,1]) > 0) > 0. 
\end{align*}
Then $\E D^{(A,v)}_1 > 0$, and the result follows. 
\end{proof}

\begin{definition}
Given $K \ss \R^d$ let $\t(K)$ denote the time
it takes for $F_t$ to cover $K$, i.e.
\begin{align*}
\tau(K) := \inf\{t \in [0,\8]: K \sm F_t 
=\emptyset\}.
\end{align*}
\end{definition}

\begin{remark}\label{rmk:stoping_time}
$\t(K)$ is a stopping time in the sense that $\{\t(K) \leq t\}$ belongs to the $\s$-algebra generated by
\begin{align*}
\{\Phi\cap B,(C_{(y,s)},\s_{(y,s)}): (y,s)\in \Phi\cap B, B \in \mathcal B(\R^d \times [0,t])\}.
\end{align*}
Moreover, $\{\t(K)\leq t\}$ is independent of the $\s$-algebra of subsequent events generated by
\begin{align*}
\{\Phi\cap B,(C_{(y,s)},\s_{(y,s)}): (y,s)\in \Phi\cap B, B \in \mathcal B(\R^d \times [t,\8))\}.
\end{align*}
\end{remark}

\begin{corollary}\label{cor:eventual_cover}
Assume that the intensity of $\Phi$ is positive and the with a positive probability, the footprint has a non-empty interior that contains the origin. Then, for all bounded sets $K \ss \R^d$, $\tau (K)$ is a.s. finite.
\end{corollary}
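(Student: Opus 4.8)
The plan is to reduce the statement for a general bounded set $K$ to the case of a single point, which is already handled by Lemma~\ref{lem:hit_one}, and then to use a covering/chaining argument to propagate the result from finitely many points to all of $K$. First I would fix a bounded $K$ and cover it by finitely many balls $B(x_1,\epsilon),\dots,B(x_m,\epsilon)$ centered at points $x_j\in K$. It suffices to show that for each individual $x\in\R^d$, the hitting time $\tau(\{x\})$ is a.s.\ finite, together with a mild uniformity: once $F_t$ contains $x_j$, a positive-probability event involving a single hailstone of non-empty interior centered near $x_j$ (available by Assumption~I) will, after a bounded additional time, make $F_t$ cover the whole ball $B(x_j,\epsilon)$ around $x_j$, provided $\epsilon$ is small enough. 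Choosing the cover fine enough then gives $\tau(K)<\8$ a.s.

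For the single-point claim, I would argue as follows. By Lemma~\ref{lem:hit_one}, with $A=\{0\}$ and any direction $v$, $\phi^{\{0\},v} = \lim_{t\to\8} D^{(\{0\},v)}_t/t > 0$ a.s.; since $D^{(\{0\},v)}_t$ is precisely the internal growth of $F_t$ in direction $v$ (the largest $r$ with $rv\in F_t$, up to the usual inf/sup distinction), positivity of this speed forces $D^{(\{0\},v)}_t\to\8$ a.s. Applying this in directions $v$ and $-v$ (or in $d$ coordinate directions and their negatives), and using that $F_t$ grows monotonically and contains the origin, we get that for every $x$ there is a.s.\ a finite random time after which $x\in F_t$; that is, $\tau(\{x\})<\8$ a.s. Translating by $S_{(x,0)}$ and using stationarity, the same holds at every center $x_j$.

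Next I would upgrade from points to small balls. Here I would use Assumption~I directly rather than the gauge machinery: with positive probability the typical RACS $C$ has non-empty interior containing the origin, so there is $r>0$ such that $\P(B_r\ss C)>0$. Given that $x_j\in F_t$ at some (stopping) time, on the independent future Poisson rain there is, in any bounded time window, a positive probability that a hailstone with $B_r\ss C$ lands with its center within distance $\epsilon_0$ of $x_j$ for any fixed $\epsilon_0$, and this single hailstone is aggregated to the heap (since its footprint meets $F_t$ at $x_j$ when $\epsilon_0<r$), thereby adding a ball of radius $r$ around a point near $x_j$ to $F_t$. Taking $\epsilon < r - \epsilon_0$ this covers $B(x_j,\epsilon)$. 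Since $\tau(\{x_j\})$ is a stopping time (Remark~\ref{rmk:stoping_time}) and the relevant future event has a fixed positive probability in, say, each unit time interval, a Borel--Cantelli / geometric-trials argument gives that $F_t$ a.s.\ eventually covers $B(x_j,\epsilon)$. Doing this for the finitely many $j$ and taking the maximum over $j$ of these a.s.\ finite times yields $\tau(K)<\8$ a.s.

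The main obstacle I anticipate is the measurability and independence bookkeeping in the chaining step: making precise that after the stopping time $\tau(\{x_j\})$ one may restart the Poisson rain with fresh independent randomness (which is exactly the content of Remark~\ref{rmk:stoping_time}), and ensuring the "add a ball of radius $r$" event is defined in terms of the future $\sigma$-algebra only. A secondary nuisance is the precise handling of the inf-versus-sup convention in the definition of $D^{(A,v)}_t$ for $A=\{0\}$, i.e.\ distinguishing internal growth from the boundary contact distance; one must check that $\phi>0$ genuinely forces $F_t$ to engulf fixed points, not merely to come arbitrarily close to them, which again is where Assumption~I (non-empty interior) rather than mere nonemptiness of $C$ is used.
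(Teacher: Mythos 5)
Your proposal is correct and shares the paper's overall skeleton: reduce via Assumption~I and thinning/monotonicity to RACS containing a fixed ball $B_r$, cover $K$ by finitely many small balls, use Lemma~\ref{lem:hit_one} (positive radial speed, hence $D^{(\{0\},v)}_t\to\8$) to hit each centre in finite time, upgrade from hitting a centre to covering the small ball around it, and take the maximum over the finite cover. The one step where you genuinely diverge is the point-to-ball upgrade. The paper does it with a two-radius comparison: it runs the auxiliary growth process $F'_t$ driven by the shrunk balls $C'=B_{r/2}$ and observes that any point of $F'_t$ lies within $r/2$ of a Poisson centre whose true footprint contains $B_r$, so $x_i\in F'_t$ \emph{instantly} implies $B_{r/2}(x_i)\ss F_t$ --- no extra waiting time and no appeal to the stopping-time structure. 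You instead wait, after the stopping time $\tau(\{x_j\})$, for a fresh hailstone containing $B_r$ to land with centre within $\epsilon_0<r$ of $x_j$, and invoke Remark~\ref{rmk:stoping_time} plus a geometric-trials/Borel--Cantelli argument. Both are valid; the paper's trick is shorter and sidesteps exactly the independence bookkeeping you flag as your main obstacle, while your version is somewhat more robust in that it uses only the existence of \emph{some} ball inside the typical RACS rather than the clean shrunk-process coupling. One small slip to fix: the parenthetical suggestion that applying Lemma~\ref{lem:hit_one} in the $d$ coordinate directions and their negatives yields $\tau(\{x\})<\8$ for \emph{every} $x$ is not right (growth along finitely many rays says nothing about other points); what you need, and what the first sentence of that step already does, is to apply the lemma in the specific direction $v=x/|x|$ and use that $D^{(\{0\},v)}_t>|x|$ forces $x\in F_t$ by the closedness of $F_t$.
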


\begin{proof}
It suffices to assume, by the same reasoning as in the previous proof, that the footprint $C$ is a ball of fixed radius $r>0$, sufficiently small. Let $\{x_1, \ldots, x_n\} \ss K \sm \{0\}$ such that,
\begin{align*}
K \ss \bigcup_{i=1}^n B_{r/2}(x_i).
\end{align*}
Denote also $v_i = \frac{x_i}{|x_i|}$ for $i=1,\ldots,n$.

Consider now $C' := B_{r/2}$ and $F'_t$, $D'^{(A,v)}_t$ constructed from $C'$ and $A = \{0\}$. By Lemma \ref{lem:hit_one} we have that for $i=1,\ldots,n$,
\begin{align*}
\tau'_i := \inf\{t \in [0,\8]: D'^{(A,v_i)}_t \geq |x_i|\} < \8, \quad
\text{ a.s.}
\end{align*}
By construction of $C'$, if $x_i \in F'_t$, then also $B_{r/2}(x_i) \ss F_t$,
which concludes the proof.
\end{proof}

\subsection{Phase Transition}

From Theorem \ref{thm:onesupp} there exists a growth rate $\gamma_{\theta_w(\phi)}$ in the direction,
\[
\theta_w(\phi) := \sin(\phi) e_{d+1} + \cos(\phi)w \in S^d_+, \quad w\in S^{d-1}, \quad \phi\in[0,\pi/2].
\]

For fixed $w\in S^{d-1}$, $\gamma_{\theta_w(\cdot)}$ admits the following phase transition:

\begin{theorem}\label{thm:phasetransition}
Assume that with a positive probability, the footprint has a non-empty interior that contains the origin. Then, for $w\in S^{d-1}$ there exists an angle, $\phi_*(w) \in (0,\pi/2)$ such that $\gamma_{\theta_w(\phi)}$ is positive for any $\phi \in (\phi_*(w), \pi/2]$ and $\gamma_{\theta_w(\phi)}=0$ for any $\phi \in [0,\phi_*(w)).$
\end{theorem}

\begin{proof}
Let us first show that if $\gamma_{\theta_w(\phi)}>0$, then for all $\widehat \phi \in(\phi, \pi/2)$, $\gamma_{\theta_w(\widehat \phi)}>0$.

For any $t>0$, let $x_t$ be defined by
$$t= x_t \tan\phi$$
and let 
$$ \widehat{t} = x_t \tan \widehat \phi. $$
Then
$$
H_{(x_tw,\widehat t)} \ge H_{(x_tw,t)} + H_{(0, \widehat{t}-t)} \circ
S_{(x_tw, t)},
$$
so that
$$
\gamma_{\theta_w(\widehat \phi)} = \lim \frac{H_{(x_tw,\widehat t)}}{\widehat t}
\ge \gamma_{{\theta}_w(\phi)} \frac{\tan \phi}{\tan \widehat\phi} >0.
$$

Let now
$$
\phi_*(w) = \inf \{\phi \in [0,\pi/2] \ : \ \gamma_{\theta_w(\phi)}>0 \}.
$$
If $\phi=\pi/2$, then $\gamma_{\theta_w(\phi)}>0$.
Hence, $\phi_*(w)$ is well defined.  It follows from
the last monotonicity property that it is the threshold above which
$\gamma_{\theta_w(\phi)}>0$.

It remains to prove that this threshold is non degenerate.

Let us first prove that it is positive.
Let $s_w$ denote the spatial growth rate in direction $w$
and $h$ the vertical growth rate. Both $s_w$
and $h$ are positive and finite.
So if the angle $\phi$ is smaller than $\arctan(h/s_w)>0$,
then $\gamma_{\theta_w(\phi)}=0$.

Let us now show that $\phi_*(w) < \pi/2$.

For all $n\in \N$, let $x_n=(nr/2)w$ (here $r>0$ such that with positive probability, $B_r\ss C$). Let $\Pi_n$ be the Poisson rain of RACS's that contain a ball of radius $r$ centered at $x_n$. Let $t_0\equiv T_0>0$ be the first time or arrival of a RACS of $\Pi_0$ and, for each $n=0,1,\ldots$, let $T_{n+1}=T_n+t_{n+1}$ be the first arrival time after $T_n$ of 
a RACS of $\Pi_{n+1}$. The random variables $t_n$ are i.i.d. exponential with mean, say, $b>0$. Also,
$H_{(x_n,T_n)}$ is not smaller than the sum of $(n+1)$ i.i.d. random variables with distribution $H_{(0,T_0)}$. Since ${\mathbb E} H_{(0,T_0)}>0$, it follows that $\liminf H_{(x_i,T_i)}/T_i >0$ a.s.
Further, $T_i/x_i \to 2b/r <\infty$, so $\phi_*(w) \le \arctan (2b/r) < \pi/2$.
\end{proof}

\section{The Model with $\cC$ Compact}\label{sec:bounded}

In this section we study the growth of the heap starting with a compact substrate $\cC\ss\R^d$, in some convex set of directions $\Theta$.

\subsection{Asymptote at $0 \in \cC$}

In this section we fix $0 \in \cC$. Let $\cC^{(0)} = \{0\}$.
Whenever $\H_t^{(\Theta)}$ is computed with respect to $\cC^{(0)}$
(resp. $\cC$) we denote it by $\H_t^{(\Theta,0)}$ 
(resp. $\H_t^{(\Theta)}$). An analogous notation
is used  for all the other possible constructions.
Given a constant $M \geq 0$, the measure preserving transformation
$S_{(0,M)}$ of $\W$ to itself is denoted by $S_M$.

In the next lemma $\t := \t(\cC)$
denotes the time it takes for $F_t^{(0)}$ to cover the whole set $\cC$.

\begin{lemma}\label{lem:Cbounded1}
For all $\Theta \ss S^d_+$ closed and for all $M,t \geq 0$,
the following inequalities hold on $\{\t \leq M\}$,
\begin{align*}
\H^{(\Theta,0)}_{M+t} \geq \H^{(\Theta)}_t \circ S_M \geq \H^{(\Theta,0)}_t \circ S_M.
\end{align*}
\end{lemma}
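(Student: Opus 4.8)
The plan is to establish the two inequalities separately. The right-hand one, $\H^{(\Theta)}_t\circ S_M\ge \H^{(\Theta,0)}_t\circ S_M$, in fact holds pathwise before composing with $S_M$ and needs no hypothesis on $\tau$: since $\cC^{(0)}=\{0\}\subseteq \cC$, the monotonicity in the substrate recalled in Section \ref{sec:23} gives $H^{(0)}_{(x,t)}\le H_{(x,t)}$ for every $(x,t)$, so every admissible pair $(x,h)$ in the definition of $\H^{(\Theta,0)}_t$ is also admissible for $\H^{(\Theta)}_t$; taking suprema yields $\H^{(\Theta,0)}_t\le\H^{(\Theta)}_t$ pointwise on $\W$, and composing with the measurable map $S_M$ preserves the inequality.

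The left-hand inequality is the core of the lemma. The idea is that on $\{\tau\le M\}$ the stick heap has ``filled'' all of $\cC$ by time $M$: monotonicity of $t\mapsto F^{(0)}_t$ together with the definition $\tau=\tau(\cC)$ gives $\cC\subseteq F^{(0)}_M$ (the boundary case $\tau=M$ is a null event for the fixed time $M$ and can be discarded), i.e.\ $H^{(0)}_{(x,M)}\ge 0$ for all $x\in\cC$. I would then run both heaps ``from time $M$ on'' against the same driving data. By the compatibility property of $S$, the process $s\mapsto H_{(x,s)}\circ S_M$ is exactly the $\cC$-substrate height process driven by the $S_M$-shifted marks and Poisson rain, hence it solves \eqref{eq:recursionH} for $s\ge 0$ with initial value $H_{(x,0)}=0$ on $\cC$ and $-\8$ off $\cC$; on the other hand, applying \eqref{eq:recursionH} with $u=M$ shows that $s\mapsto H^{(0)}_{(x,M+s)}$ solves the \emph{same} recursion for $s\ge 0$, now with initial value $H^{(0)}_{(x,M)}$ at $s=0$. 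On $\{\tau\le M\}$ this latter initial value dominates the former pointwise in $x$ (for $x\in\cC$ because $H^{(0)}_{(x,M)}\ge 0$, for $x\notin\cC$ because the competitor equals $-\8$ there), so the monotonicity of the construction \eqref{eq:recursionH} with respect to its initial condition --- the same comparison principle already used in the proof of Lemma \ref{lemmedebase} --- yields $H^{(0)}_{(x,M+t)}\ge H_{(x,t)}\circ S_M$ for every $x$ and every $t\ge 0$, on $\{\tau\le M\}$.

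It then remains to pass from this pointwise-in-$x$ domination of heights to the domination of the functionals $\H$, which is a routine unwinding of the definition: for any $h\ge 0$ and $x$ with $(x,h)\in|(x,h)|\Theta$ and $H_{(x,t)}\circ S_M\ge h$, the previous step gives $H^{(0)}_{(x,M+t)}\ge h$, so $h$ is admissible in the definition of $\H^{(\Theta,0)}_{M+t}$; taking the supremum over all such $(x,h)$ gives $\H^{(\Theta,0)}_{M+t}\ge \H^{(\Theta)}_t\circ S_M$ on $\{\tau\le M\}$, which closes the argument.

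I expect the main obstacle to be the middle step: making precise, via the homogeneity/compatibility of $S$, that ``continuing the stick heap after time $M$'' and ``the $\cC$-heap driven by the $S_M$-shifted data'' are two solutions of one and the same recursion \eqref{eq:recursionH} differing only in their initial condition, and then invoking the correct form of monotonicity (with respect to the initial condition). Everything else is either a direct appeal to Section \ref{sec:23} or bookkeeping; the one minor point requiring care is the topological subtlety in the implication $\{\tau\le M\}\subseteq\{\cC\subseteq F^{(0)}_M\}$ near $\tau=M$.
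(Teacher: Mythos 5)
Your proposal is correct and follows essentially the same route as the paper: the right-hand inequality via monotonicity in the substrate $\cC^{(0)}\subseteq\cC$, and the left-hand one by noting that on $\{\tau\le M\}$ one has $\cC\subseteq F^{(0)}_M$, hence $H^{(0)}_{(x,M)}\ge H_{(x,0)}\circ S_M$ for all $x$, and then comparing the two solutions of \eqref{eq:recursionH} after time $M$ via monotonicity in the initial condition. Your version merely spells out the compatibility/initial-condition bookkeeping that the paper leaves implicit.
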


\begin{proof}
On $\t \leq M$, $\cC \ss F_M^{(0)}$, so that for every $x \in \R^d$,
$H^{(0)}_{(x,M)} \geq H_{(x,0)} \circ S_M$. This implies
that $H^{(0)}_{(x,M+t)} \geq H_{(x,t)} \circ S_M$ for all $t>0$
by the monotonicity in the construction of $H$.
The left-most inequality then follows.

The right-most inequality is just a consequence of the
monotonicity w.r.t. the initial substrates $\cC^{(0)} \ss \cC$.
\end{proof}

\begin{lemma}\label{lem:Cbounded2}
Under the assumptions of Corollary \ref{cor:eventual_cover}
and Theorem \ref{thm:onesupp}, if $0\in \cC$, we have
\begin{align*}
\lim_{t\to\8} \frac{\H_t^{(\Theta)}}{t} = \lim_{t\to\8}\frac{\E \H_t^{(\Theta)}}{t} = \sup_{t>0} \frac{\E \H_t^{(\Theta,0)}}{t} =\gamma^{(\Theta)} < \8,
\end{align*}
where the first limit is both in the a.s. and the $L_1$ sense.
\end{lemma}

Notice that the rightmost term is the one corresponding to $\cC^{(0)}$;
this tells us that asymptotically, the heaps starting at
$\cC$ or $\cC^{(0)}$ behave similarly in terms of directional shape.

\begin{proof}
By  Lemma \ref{lem:Cbounded1}, 
Theorem \ref{thm:onesupp} and the fact
that $S_M$ is measure preserving, we have 
\begin{align*}
\lim_{t\to\8} \frac{\H_t^{(\Theta)}\circ S_M}{t} 
1_{\{\t\leq M\}}
= \sup_{t>0} \frac{\E \H_t^{(\Theta,0)}}{t} 
1_{\{\t\leq M\}} 
\text{ a.s.}
\end{align*}
Hence
\begin{eqnarray*}
\P \left(
\lim_{t\to\8} \frac{\H_t^{(\Theta)}}{t} = \sup_{t>0} \frac{\E 
\H_t^{(\Theta,0)}}{t}\right)
 & = & \P \left(\lim_{t\to\8} 
\frac{\H_t^{(\Theta)}\circ S_M}{t} = \sup_{t>0} \frac{\E 
\H_t^{(\Theta,0)}}{t}\right)\\
& \ge & \P (\tau \le M).
\end{eqnarray*}
Since $M>0$ is arbitrary and $\tau$ is finite a.s., we get
the a.s. convergence of
$\H_t^{(\Theta)}/t$ to the announced limit. 

Now we proceed to show the convergence in $L_1$.
By Lemma \ref{lem:Cbounded1},
\begin{align*}
\1\frac{\H^{(\Theta,0)}_{M+t}}{t}-\sup_{t>0} \frac{\E \H_t^{(\Theta,0)}}{t}\21_{\{\t\leq M\}} &\geq \1\frac{\H^{(\Theta)}_t \circ S_M}{t} -  \sup_{t>0} \frac{\E \H_t^{(\Theta,0)}}{t}\21_{\{\t\leq M\}}.
\end{align*}
Then,
\begin{align*}
&\E\left|\frac{\H^{(\Theta,0)}_{M+t}}{t}-\sup_{t>0} \frac{\E \H_t^{(\Theta,0)}}{t}\right|
\geq \ &\E\left|\1\frac{\H^{(\Theta)}_t \circ S_M}{t} -  \sup_{t>0} \frac{\E \H_t^{(\Theta,0)}}{t}\21_{\{\t\leq M\}}\right|.
\end{align*}
By the independence property
given in Remark \ref{rmk:stoping_time},
and using again that $S_M$ is measure preserving,
\begin{align*}
&\E\left|\frac{\H^{(\Theta,0)}_{M+t}}{t}-\sup_{t>0} \frac{\E \H_t^{(\Theta,0)}}{t}\right| 
\geq \ &\E\left|\frac{\H^{(\Theta)}_t}{t} -  \sup_{t>0} \frac{\E \H_t^{(\Theta,0)}}{t}\right|\P(\t\leq M).
\end{align*}
Choose $M$ sufficiently large so $\P(\t\leq M) > 0$. 
Then letting $t\to\8$ concludes the proof thanks to the $L^1$ convergence
of Theorem \ref{thm:onesupp}.
\end{proof}

\subsection{Asymptote at $0 \notin \cC$}

In this section we assume that $0 \notin \cC\neq \emptyset$.
We use the following notation: $\cC^{(0)} = \{0\}$ and 
$\cC^{(1)} = \cC \cup \{0\}$.
Whenever $\H_t^{(\Theta)}$ is computed with respect to
$\cC^{(0)}$, (resp. $\cC^{(1)}$ or $\cC$) we denote it by
$\H_t^{(\Theta,0)}$ (resp. $\H_t^{(\Theta,1)}$ or $\H_t^{(\Theta)}$).
We use analogous notation for all other possible constructions.

In the next lemma $\t := \t(\cC^{(0)})$ is the time it takes for
$F_t$ to hit the origin.

\begin{lemma}\label{lem:Cbounded3}
For all closed $\Theta \ss S^d_+$ and all $M,t \geq 0$,
the following inequalities hold on $\{\t \leq M\}$,
\begin{align*}
\H^{(\Theta,1)}_{t+M} \geq \H^{(\Theta)}_{t+M} \geq \H^{(\Theta,0)}_t\circ S_M.
\end{align*}
\end{lemma}

\begin{proof}
The proof is very similar to that of Lemma \ref{lem:Cbounded1}.
On $\t \leq M$, $\cC^{(0)} \ss F_M$; therefore for every
$x \in \R^d$, $H_{(M,x)} \geq H_{(0,x)}^{(0)} \circ S_M$, which
implies the second inequality.
%
The first inequality is a consequence of the monotonicity.
\end{proof}

Using this lemma (instead of Lemma \ref{lem:Cbounded1}) and the same ideas as in the proof of Lemma \ref{lem:Cbounded2} gives: 

\begin{theorem}\label{thm:bounded}
Under the assumptions of Corollary \ref{cor:eventual_cover}
and Theorem \ref{thm:onesupp}, in all cases ($0\in \cC$ or $0\notin \cC$),
\begin{align*}
\lim_{t\to\8} \frac{\H_t^{(\Theta)}}{t} = \lim_{t\to\8}\frac{\E \H_t^{(\Theta)}}{t} = \sup_{t>0} \frac{\E \H_t^{(\Theta,0)}}{t} =\gamma^{(\Theta)}< \8,
\end{align*}
with the first limit holding both a.s. and in the $L_1$ sense.
\end{theorem}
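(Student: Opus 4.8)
The plan is to reduce this final theorem to the already-established Lemma \ref{thm:Cbounded1} (the case $0 \in \cC$) by treating the two cases separately. For $0 \in \cC$, there is nothing new: Lemma \ref{thm:Cbounded1} is exactly the claim. So the work is entirely in the case $0 \notin \cC$, where I would run the same argument as in the proof of Lemma \ref{thm:Cbounded1} but with Lemma \ref{lem:Cbounded2} playing the role of Lemma \ref{lem:Cbounded1}. Note that under Assumption I and positive intensity, Corollary \ref{cor:eventual_cover} guarantees that $\t = \t(\cC)$, the time for $F_t$ to hit the origin, is a.s. finite (the origin is a bounded set, or one can use that $\cC$ is hit in finite time and then the heap spreads from there), which is the key finiteness input.

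First I would establish the a.s. convergence. Fix $M \geq 0$. On $\{\t \leq M\}$, Lemma \ref{lem:Cbounded2} sandwiches $\H^{(\Theta)}_{t+M}$ between $\H^{(\Theta,1)}_{t+M}$ and $\H^{(\Theta,0)}_t \circ S_M$. Dividing by $t$ and letting $t \to \8$: the upper term $\H^{(\Theta,1)}_{t+M}/t$ converges a.s. to $\gamma^{(\Theta)}$ by Lemma \ref{thm:Cbounded1} applied to the bounded substrate $\cC^{(1)} = \cC \cup \{0\}$ (which contains $0$), since $(t+M)/t \to 1$; the lower term $(\H^{(\Theta,0)}_t \circ S_M)/t$ converges a.s. to $\sup_{t>0}\E\H^{(\Theta,0)}_t/t = \gamma^{(\Theta)}$ by Theorem \ref{thm:onesupp} and the fact that $S_M$ is measure preserving. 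Hence $\H^{(\Theta)}_{t+M}/t \to \gamma^{(\Theta)}$ a.s. on $\{\t \leq M\}$, and since $\P(\t \leq M) \to 1$ as $M \to \8$, the a.s. convergence of $\H^{(\Theta)}_t/t$ to $\gamma^{(\Theta)}$ holds on a set of probability one.

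For the $L_1$ convergence I would mimic the corresponding part of the proof of Lemma \ref{thm:Cbounded1}. Starting from the sandwich in Lemma \ref{lem:Cbounded2}, on $\{\t \leq M\}$ one gets, for both the upper and lower bounding quantities, inequalities of the form
\begin{align*}
\left(\frac{\H^{(\Theta,1)}_{t+M}}{t}-\gamma^{(\Theta)}\right)1_{\{\t\leq M\}} \geq \left(\frac{\H^{(\Theta)}_{t+M}}{t}-\gamma^{(\Theta)}\right)1_{\{\t\leq M\}} \geq \left(\frac{\H^{(\Theta,0)}_t\circ S_M}{t}-\gamma^{(\Theta)}\right)1_{\{\t\leq M\}}.
\end{align*}
Taking expectations of absolute values and using that $\E|\H^{(\Theta,1)}_{t+M}/t - \gamma^{(\Theta)}| \to 0$ (by the $L_1$ part of Lemma \ref{thm:Cbounded1}) controls the upper deviation, while for the lower deviation one uses Remark \ref{rmk:stoping_time}: $1_{\{\t \leq M\}}$ is independent of $\H^{(\Theta,0)}_t \circ S_M$, so $\E|(\H^{(\Theta,0)}_t \circ S_M/t - \gamma^{(\Theta)})1_{\{\t\leq M\}}| = \E|\H^{(\Theta,0)}_t/t - \gamma^{(\Theta)}|\,\P(\t \leq M)$, which tends to $0$ by Theorem \ref{thm:onesupp}. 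Combining, $\E|\H^{(\Theta)}_{t+M}/t - \gamma^{(\Theta)}|1_{\{\t \leq M\}} \to 0$ for each fixed $M$; choosing $M$ large so that $\P(\t > M)$ is small, plus a uniform-integrability or crude-bound argument on the event $\{\t > M\}$ (using $\H^{(\Theta)}_t \leq \H^{(S^d_+)}_t$ and Lemma \ref{lem:L1bound} together with Lemma \ref{thm:Cbounded1}), upgrades this to full $L_1$ convergence, which in turn gives $\E\H^{(\Theta)}_t/t \to \gamma^{(\Theta)}$. I expect the main obstacle to be exactly this last step: handling the contribution of the event $\{\t > M\}$ cleanly in the $L_1$ estimate, i.e. getting a bound on $\E\H^{(\Theta)}_t 1_{\{\t > M\}}/t$ that is small uniformly in $t$ — this requires a moment bound on $\H^{(\Theta)}_t$ (available from Lemma \ref{lem:L1bound} via monotonicity in $\cC$, since $\cC$ is bounded hence contained in a suitable translate where the stick bounds apply) together with, ideally, an exponential tail on $\t$ so that Cauchy–Schwarz or Hölder closes the argument.
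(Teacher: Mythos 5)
Your proposal follows the paper's own route: the paper proves this theorem in a single line, by invoking Lemma \ref{lem:Cbounded2} in place of Lemma \ref{lem:Cbounded1} and repeating the argument of Lemma \ref{thm:Cbounded1}; that is exactly your plan (case split on $0\in\cC$ versus $0\notin\cC$, sandwich on $\{\tau\le M\}$, a.s.\ limit by letting $M\to\infty$, $L_1$ limit via the independence in Remark \ref{rmk:stoping_time}). The a.s.\ part of your argument is fine as written.

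The one place where you flag an obstacle --- controlling $\E\bigl[\H^{(\Theta)}_{t+M}1_{\{\tau>M\}}\bigr]/t$ uniformly in $t$, for which you propose second moments and an exponential tail for $\tau$ --- is not actually needed, and the tail bound on $\tau$ is not available from Corollary \ref{cor:eventual_cover}, which only gives a.s.\ finiteness. The clean way to close the $L_1$ step is to treat the two deviations separately. Since $\H^{(\Theta)}_{t+M}\le\H^{(\Theta,1)}_{t+M}$ holds \emph{everywhere} by monotonicity in $\cC$ (no truncation by $\{\tau\le M\}$ is required on this side), the positive part $\bigl(\H^{(\Theta)}_{t+M}/t-\gamma^{(\Theta)}\bigr)^+$ is dominated by $\bigl(\H^{(\Theta,1)}_{t+M}/t-\gamma^{(\Theta)}\bigr)^+$, whose expectation tends to $0$ by the $L_1$ statement of Lemma \ref{thm:Cbounded1} applied to the bounded substrate $\cC^{(1)}=\cC\cup\{0\}$. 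The negative part is bounded by $\gamma^{(\Theta)}$ pointwise because $\H^{(\Theta)}_{t+M}\ge 0$, so on $\{\tau>M\}$ it contributes at most $\gamma^{(\Theta)}\P(\tau>M)$, while on $\{\tau\le M\}$ it is dominated by $\bigl(\H^{(\Theta,0)}_t\circ S_M/t-\gamma^{(\Theta)}\bigr)^-$, whose expectation equals that of $\bigl(\H^{(\Theta,0)}_t/t-\gamma^{(\Theta)}\bigr)^-$ by measure preservation and tends to $0$ by Theorem \ref{thm:onesupp}. This positive/negative split is also what makes the ``take expectations of absolute values'' step in your sandwich rigorous, since $X\ge Y$ alone does not imply $\E|X|\ge\E|Y|$. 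With that adjustment your argument is complete and coincides with the paper's.
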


\section{The Model with $\cC$ a Convex Cone and its Generalizations}\label{sec:convexCone}

In this section, the substrate is first a convex cone of $\R^d$
with its vertex at the origin,
and then an object similar to such a cone but more general.

\begin{definition}
Given $\cone \ss \R^d$ a closed convex cone
with vertex at the origin, we
define $\Theta(\cone) \ss S^d_+$ to be the following subset of $S^d_+$
(see Definition \ref{def31}):
\begin{align*}
\Theta(\cone) := S^d_+ \cap (\cone\times\R).
\end{align*}
\end{definition}

For the proofs of this section, we use yet another
property of the model, which is some form
of invariance by time reversal. Consider the reflection
\begin{align*}
R: (x,t) \mapsto (x,-t).
\end{align*}
Because the Poisson rain is invariant in law by $R$,
and because the marks are i.i.d.,
there exists a measure preserving $V:\W \to \W$ which 
is compatible with $R$, i.e.:
\begin{align*}
(\Phi\circ V)(A) = \Phi(RA)\quad
C_{(x,t)}\circ V = C_{R(x,t)},\quad
\s_{(x,t)}\circ V = \s_{R(x,t)}.
\end{align*}

In the following theorem,
$\H_t^{(\Theta,0)}$ and $\H_t^{(\Theta,1)}$ are the heights
computed when starting with the substrate $\cC^{(0)} := \{0\}$
or $\cC^{(x)} := \{0,x\}$ respectively whereas $H_{(x,t)}$ is
the height at $x$ when starting with the substrate $\cC=\cone$.

\begin{theorem}\label{thm:cone_base}
Under the assumptions of Corollary \ref{cor:eventual_cover},
for all closed convex cones $\cC \ss \R^d$ 
and all $x \in \R^d$, 
\begin{align}
\lim_{t\to\8} \frac{\max(0,H_{(x,t)})}{t} =
\lim_{t\to\8}\frac{\E\max(0, H_{(x,t)})}{t} = \sup_{t>0}\frac{\E \H_t^{(\Theta(\cC),0)}}{t}=Z < \8,
\label{eq:indlimcon}
\end{align}
where the first limit is in $L_1$.
\end{theorem}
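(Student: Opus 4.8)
The plan is to exploit the same kind of comparison and time-shift argument used for the bounded-substrate cases in Lemmas~\ref{lem:Cbounded1}--\ref{lem:Cbounded2}, but now combined with the time-reversal map $V$ to get a matching lower bound. The point is that the cone $\cC=\cone$ is \emph{not} bounded, so Corollary~\ref{cor:eventual_cover} does not directly give us a finite time at which $F_t$ covers all of $\cone$; instead we will only use that $\t(\{0\})<\infty$ a.s.\ (finiteness of the time for $F_t$ to hit the origin), together with the scale/shift invariance of the cone.

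First I would establish the upper bound $\limsup_t \max(0,H_{(x,t)})/t \le \sup_{t>0}\E\H_t^{(\Theta(\cC),0)}/t$. For this, observe that starting the heap from the \emph{full} cone $\cC=\cone$ dominates starting it from any finite subset $\{0\}$; but domination goes the wrong way, so instead I would use that $\H_t^{(\Theta(\cC),0)}$ already captures the height that $\cone\times\R$ can produce at $x$ up to a null set. More carefully: the event $\{H_{(x,t)}\ge h\}$ for $(x,h)$ in the cone of directions $\Theta(\cC)$ forces the existence of a chain of hailstones connecting $x$ back to some point of $\cone$; since $\cone$ is a cone with vertex at $0$, any such chain can be re-based at $0$ after a shift, and the key is that by Theorem~\ref{thm:onesupp} applied to the substrate $\{0\}$ the quantity $\H_t^{(\Theta(\cC),0)}/t$ converges to $\sup_{t>0}\E\H_t^{(\Theta(\cC),0)}/t =: Z$. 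The upper bound then follows by a sub-additive / union-bound estimate over the (a.s.\ finitely many, by Lemma~\ref{lem:upper_diam}) hailstones contributing to $H_{(x,t)}$, together with the light-tail assumption~\eqref{eq:ltail} to control the overshoot; this is analogous to the argument proving Lemma~\ref{lem:L1bound}.

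For the lower bound, $\liminf_t \max(0,H_{(x,t)})/t \ge Z$, I would proceed as follows. Fix $M>0$ with $\P(\t(\{0\})\le M)>0$. On the event $\{\t(\{0\})\le M\}$ we have $\{0\}\subseteq F_M$ (when starting from $\cC=\cone$), hence by the monotonicity of the construction of $H$, for every $y$ and every $s>0$,
\begin{align*}
H_{(y,M+s)} \ge H^{(0)}_{(y,s)}\circ S_M,
\end{align*}
where $H^{(0)}$ is the stick heap. Taking $y$ to realize $\H_s^{(\Theta(\cC),0)}$ in the shifted system and using that $\Theta(\cC)$ is the cone of directions pointing into $\cone$, one gets a point $(x',h)$ with $x'$ in (a shifted copy of) $\cone$ and $h$ close to $\H^{(\Theta(\cC),0)}_s\circ S_M$ such that $H_{(x',M+s)}\ge h$. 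One then uses Lemma~\ref{lemmedebase}-type super-additivity \emph{inside the cone} (the cone is closed under the relevant translations since $\cone+\cone\subseteq\cone$), plus the time-reversal invariance via $V$ to move the base point from an arbitrary point of $\cone$ to the prescribed $x$, to conclude that $H_{(x,M+s)}/s$ has $\liminf \ge Z$ on a set of positive probability; ergodicity of the flow $S$ (restricted to the time direction) then upgrades this to probability one for the $L_1$/a.s.\ statement — but since, as the footnote warns, a.s.\ convergence of the \emph{whole} sequence is not available, I would instead argue the $L_1$ convergence directly: combine the uniform integrability furnished by Lemma~\ref{lem:L1bound} (the family $\{\max(0,H_{(x,t)})/t\}$ is $L_1$-bounded, indeed dominated in expectation) with the two-sided bounds above to squeeze $\E\max(0,H_{(x,t)})/t \to Z$, and then upgrade to $L_1$ convergence using that the limit is constant.

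The main obstacle I expect is the lower bound, specifically handling the unboundedness of $\cone$: unlike in Section~5, we cannot wait for $F_t$ to cover the whole substrate, so the coupling that transfers growth of the stick heap into growth of the cone heap at the \emph{specific} point $x$ must be done purely via shifts that stay inside $\cone$ and via the time-reversal map $V$, and one must check carefully that the resulting re-based process has the right law and the right independence so that Liggett's theorem (or at least the expectation-convergence part of it) applies. A secondary subtlety is the $L_1$-versus-a.s.\ gap flagged in the footnote: the argument must be organized so that only $L_1$ convergence is claimed, which means leaning on the uniform-integrability bound of Lemma~\ref{lem:L1bound} rather than on a pointwise super-additive ergodic statement for $H_{(x,t)}$ itself.
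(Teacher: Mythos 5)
Your proposal is missing the central mechanism of the paper's proof, which is an exact pathwise duality at the vertex of the cone: take any chain of hailstones connecting some $z\in\cC$ at time $0$ to the vertex at time $t$, and reverse time via $V\circ S_{(0,t)}$; this turns it into a chain of the \emph{stick} heap that starts at the origin and ends at spatial location $z\in\cC$ at height $H_{(0,t)}$, and since $(z,h)/|(z,h)|\in\Theta(\cC)$ for all $h>0$, one obtains the identity $H_{(0,t)}=\H^{(\Theta(\cC),0)}_{t}\circ V\circ S_{(0,t)}$. Hence the two sides are equal in distribution for each $t$, and the $L_1$ limit is imported directly from Theorem~\ref{thm:onesupp}; this single identity is what identifies the limit as $Z=\sup_{t}\E\H_t^{(\Theta(\cC),0)}/t$ (and is also why only $L_1$, not a.s., convergence is obtained, since the measure-preserving map depends on $t$). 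Your upper bound does not reach this constant: a union bound over contributing hailstones with light tails, ``analogous to Lemma~\ref{lem:L1bound},'' only yields $\sup_t\E[\,\cdot\,]/t<\8$, not convergence to the specific value $Z$; and ``re-basing the chain at $0$ after a shift'' is not a law-preserving operation when the chain's endpoint $z$ is random and $t$-dependent --- it is precisely the time reversal that re-bases the chain for free.

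The second gap is in the lower bound. Restarting a stick heap at time $M$ once $0\in F_M$ controls the \emph{maximal} height of the cone heap over the direction set $\Theta(\cC)$, whereas the theorem asserts the rate $Z$ at a \emph{fixed} point $x$. The step you describe as ``using $V$ to move the base point from an arbitrary point of $\cC$ to the prescribed $x$'' is exactly where the difficulty sits, and time reversal does not perform this localization. The paper localizes in two further stages: for $x\in\cC$ it sandwiches $H_{(x,t)}$ between a shifted copy of the vertex case (legitimate because $\cC+x\ss\cC$, so monotonicity in the substrate applies) and the time-reversed maximal height for the two-point substrate $\{0,x\}$, whose rate is again $Z$ by Lemma~\ref{thm:Cbounded1}; for $x\notin\cC$ it builds a finite chain of shifted cones $\cC=\cC_0\ss\cdots\ss\cC_m$ with consecutive reference points at distance at most $r/2$, and transfers the rate along the chain using the Poisson stream of RACS covering two consecutive points simultaneously (giving $H_{(y,t)}\ge H_{(x,t-\eta_t)}$ with $\eta_t$ exponential), together with uniform integrability to upgrade convergence in probability to $L_1$. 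None of these localization devices appears in your proposal, so as written it proves at best that the maximal height of the cone heap over $\Theta(\cC)$ grows at rate $Z$, not the stated pointwise result.
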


\begin{proof}
\textbf{Case 1: $x$ is the vertex of the cone.}

Without loss of generality, the vertex is assumed to be at the origin. The key observation is the following duality between the dynamics starting with $\cC$ and $\cC^{(0)}$,
\begin{align*}
H_{(0,t)} = \H_{(0,t)}^{(\Theta(\cC),0)}\circ V \circ S_{(0,t)}.
\end{align*}
Once this is established, the $L_1$ limit results from the fact that $S_{(0,t)}\circ V$ is measure preserving and therefore both sides are equivalent in distribution.

We first prove that 
$H_{(0,t)} \leq \H_{(0,t)}^{(\Theta(\cC),0)}\circ V \circ S_{(0,t)}$.
Consider the set of points
$(x_0,t_0), \ldots, (x_n,t_n) \in \R^d \times [0,t)$
``connecting'' $0$ with its height at $t$. Specifically, these satisfy:
\begin{enumerate}
\item $(x_i,t_i) \in \supp \Phi$ for $i = 0, \ldots, n$.
\item $0 \leq t_i < t_{i+1} < t$ for $i = 0, \ldots, (n-1)$.
\item $0 \in C_{(x_n,t_n)}$ and $H_{(0,s)} = H_{(0,t_n)}$ for $s \in [t_n,t]$.
\item There exists $y_i \in C_{(x_{i+1},t_{i+1})} \cap C_{(x_i,t_i)}$ such that $H_{(y_i,s)} = H_{y_i,t_i}$ for $s \in [t_i,t_{t+1})$ and $i = 0, \ldots, (n-1)$.
\item There exists $z \in C_{(x_0,t_0)} \cap \cC$ and $H_{(z,s)} = 0$ for $s \in [0,t_0)$.
\end{enumerate}
Let now $(\tilde x_i,\tilde t_i) = T_{(0,t)}R(x_{n-i},t_{n-i}) = (x_{n-i},t-t_{n-i})$ and $\tilde y_i = y_{n-i}$. Then, by the compatibility properties, these quantities satisfy:
\begin{enumerate}
\item $(\tilde x_i,\tilde t_i) \in \supp \Phi \circ V \circ S_{(0,t)}$ for $i = 0, \ldots, n$.
\item $0 \leq t_i < t_{i+1} < t$ for $i = 0, \ldots, (n-1)$.
\item $z \in C_{(\tilde x_n,\tilde t_n)}$ and $H_{(z,s)}^{(0)}\circ V \circ S_{(0,t)} = H_{(z,t_n)}^{(0)}\circ V\circ S_{(0,t)}$ for $s \in [\tilde t_n,t]$.
\item $\tilde y_{i+1} \in C_{(\tilde x_{i+1},\tilde t_{i+1})}\circ V\circ S_{(0,t)}\cap C_{(\tilde x_i,\tilde t_i)}\circ V\circ S_{(0,t)}$ such that $H_{(\tilde y_i,s)}^{(0)}\circ V\circ S_{(0,t)} = H_{\tilde x_i,\tilde t_i}^{(0)}\circ V\circ S_{(0,t)}$ for $s \in [\tilde t_i,\tilde t_{t+1})$ and $i = 0, \ldots, (n-1)$.
\item $0 \in C_{(\tilde x_1,\tilde t_1)}$ and $H_{(0,s)}^{(0)}\circ V \circ S_{(0,t)}= 0$ for $s \in [0,\tilde t_0)$.
\end{enumerate}
Given that $z \in \cC$ and $\cC$ is the convex cone $\cone$,
then for any $h>0$,
\begin{align*}
\frac{(z,h)}{|(z,h)|} \in \Theta(\cC).
\end{align*}
Then,
\begin{align*}
\H_{(0,t)}^{(\Theta(\cC),0)}\circ V \circ S_{(0,t)} &\geq H_{(z,t)}^{(0)}\circ V\circ S_{(0,t)}\\
&= \sum_{i=0}^n \s_{(\tilde x_i,\tilde t_i)}\circ V \circ S_{(0,t)}\\
&= \sum_{i=0}^n \s_{(x_i,t_i)}\\
&= H_{(0,t)}.
\end{align*}
The proof of the inequality in the other direction is similar to the previous one but starting with the dynamics of $\H_{(0,t)}^{(\Theta(\cC),0)} \circ V \circ S_{(0,t)}$.

\begin{figure}\label{fig:cone_base}
 \begin{center}
  \includegraphics[width=12cm]{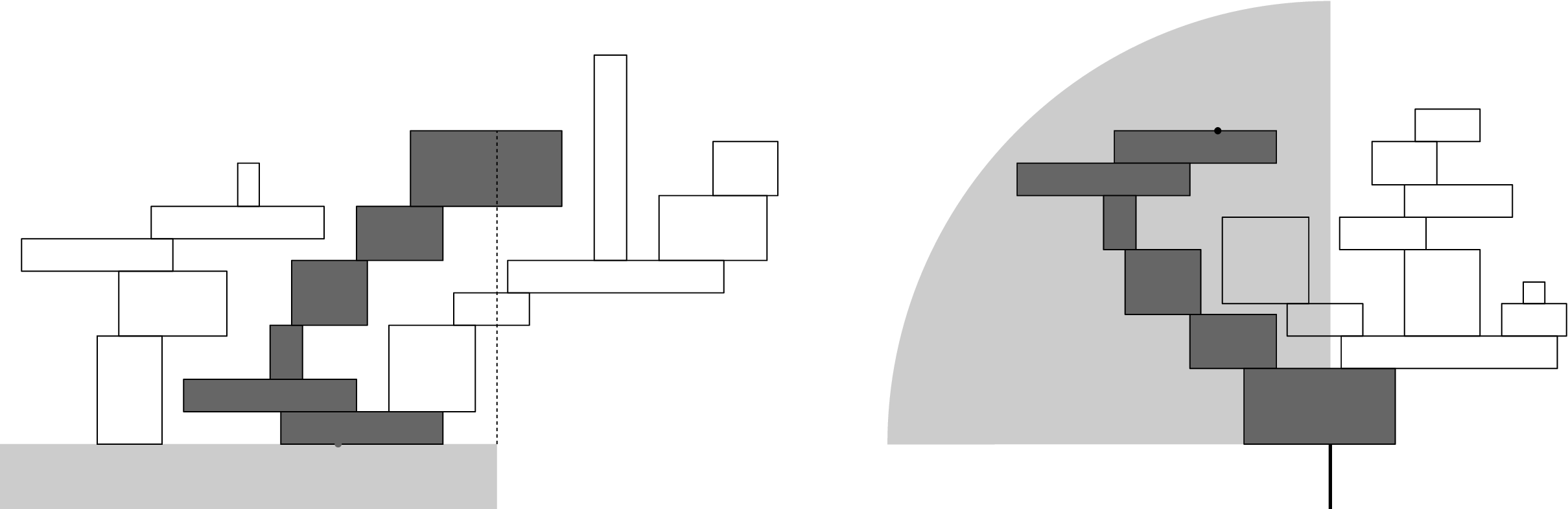}
 \end{center}
 \caption{Visualization of the duality argument in the proof of Theorem \ref{thm:cone_base} in the case $x=0$.}
\end{figure}

\textbf{Case 2: $x \in \cC$.}

The result in this case is obtained by comparison with the growth of the vertex studied in Case 1.
From the monotonicity w.r.t. the initial substrates,
\begin{align}\label{eq:33}
H_{(0,t)}^{(0)}\circ S_{(x,0)} \leq H_{(x,t)}.
\end{align}
On the other hand, using $\cC^{(x)} = \{0,x\}$, we have the following identity,
\begin{align}\label{eq:3}
H_{(x,t)} \leq \H^{(\Theta(\cC),1)}_t\circ V \circ S_{(0,t)}.
\end{align}
To prove the last relation, we use again a set of points $(x_0,t_0), \ldots, (x_n,t_n) \in \R^d \times [0,t)$ connecting $x$ with its height. As before:
\begin{enumerate}
\item $(x_i,t_i) \in \supp \Phi$ for $i = 0, \ldots, n$.
\item $0 \leq t_i < t_{i+1} < t$ for $i = 0, \ldots, (n-1)$.
\item $0 \in C_{(x_n,t_n)}$ and $H_{(0,s)} = H_{(0,t_n)}$ for $s \in [t_n,t]$.
\item There exists $y_i \in C_{(x_{i+1},t_{i+1})} \cap C_{(x_i,t_i)}$ such that $H_{(y_i,s)} = H_{y_i,t_i}$ for $s \in [t_i,t_{t+1})$ and $i = 0, \ldots, (n-1)$.
\item There exists $z \in C_{(x_0,t_0)} \cap \cC$ and $H_{(z,s)} = 0$ for $s \in [0,t_0)$.
\end{enumerate}
When we consider now $(\tilde x_i,\tilde t_i) = T_{(0,t)}\circ R(x_{n-i},t_{n-i}) = (x_{n-i},t-t_{n-i})$ and $\tilde y_i = y_{n-i}$, we then have that there exists a path of RACS starting at $x$ and finishing at $(z,H_{(x,t)}) \in \Theta(\cC)$. By the very definition of $\H^{(\Theta(\cC),1)}_t\circ V \circ S_{(0,t)}$ this then implies \eqref{eq:3}.

The desired limit then follows from (\ref{eq:33}) and (\ref{eq:3}), the result of Case 1 and Lemma \ref{lem:Cbounded2}. Notice that the $L_1$ limit is hence the same for all points $x\in \cC$.

\textbf{Case 3: $x \notin \cC$.}
Let $x\notin {\cC}$ and let $u$ be the Euclidean distance from $x$ to ${\cC}$, so $|x-y|=u$ for some
$y\in {\cC}$. On the segment $[x,y]$, choose points $x_0=y,x_1,\ldots,x_m=x$ equidistantly, where $m$ is the smallest integer that exceeds $2u/r$. Consider shifted versions of ${\cC}$, say ${\cC}_0=
{\cC}, {\cC}_1,\ldots , {\cC}_m$ such that for any $i\ge 1$, ${\cC}_i \supset {\cC}_{i-1}$, and ${\cC}_i$ includes the points $x_0,\ldots, x_i$ and does not include the points $x_{i+1},\ldots,x_m$. Then we show the convergence of $\max (0,H_{(x_i,t)})$ to $Z$ in $L_1$ using an induction argument: if the convergence holds for $x_{i-1}$, then it also holds for $x_i$. Because of that, we may assume without loss of generality that $m=1$, so $y=x_0$, $x=x_1$ and $|x-y|\le r/2$.

Let $\widetilde{\cC}= {\cC}_1$ and let $\widetilde{H}_{(y,t)}$ be the height associated with the cone $\widetilde{\cC}$. Let $\varepsilon$ be a positive number. 

First, we show that
\begin{equation}\label{3first}
\lim_{t\to\infty} {\mathbb P} \left(\frac{H_{(y,t)}}t > Z+\varepsilon \right) = 0
\end{equation}
and that the random variables $\max (0,H(y,t))/t$ are uniformly integrable. By monotonicity (see Subsection \ref{sec:23}), we have $H_{(y,t)} \le \widetilde{H}_{(y,t)}$ and, in view of the previous cases, $\max (0,\widetilde{H}_{(y,t)})/t \to Z$ in $L_1$ and, therefore, in probability. Therefore, both \eqref{3first} and uniform integrability of $H_{(y,t)}/t$ follow.

Secondly, we show that
\begin{equation}\label{3second}
\lim_{t\to\infty} {\mathbb P} \left(\frac{H_{(y,t)}}t < Z-\varepsilon \right)=0.
\end{equation}
Indeed, let $\Pi_{x,y}$ be a stream of RACS's that contain a ball of radius $r$ that covers both $x$ and $y$. By our assumptions, this is a homogeneous Poisson process of positive intensity, say $\nu$. For each $t$, let $t-\eta_t$ be the last arrival of such a RACS before $t$. Clearly, the random variable $\eta_t$ has an exponential distribution with parameter $\nu$. Further, $H_{(y,t)}\ge H_{(x,t-\eta_t)}$ a.s., so for any $T>0$,
$$
{\mathbb P} \left(\frac {H_{(y,t)}}t < Z-\varepsilon \right) 
\le {\mathbb P} (\eta_t >T) + {\mathbb P}\left( \frac{H_{(x,t-T)}}t <
Z-\varepsilon\right)
\to e^{-\nu T}
$$
as $t\to\infty$. Letting $T\to\infty$ leads to \eqref{3second}.

Finally equations \eqref{3first} and \eqref{3second} imply the convergence in probability $H_{(x,t)}/t \to Z$ and, further, uniform integrability implies the $L_1$-convergence of $\max (0, H_{(x,t)})$
to $Z$.

%
%
%
%
%
%
%
\end{proof}

\begin{definition}
We say that $\cC \ss \R^d$ is similar to the closed convex cone
$\cC^{(c)} \ss \R^d$ if there exists two vectors $v_\pm \in \R^d$ such that
\begin{align*}
\cC^{(c)} + v_- \ss \cC \ss \cC^{(c)} + v_+.
\end{align*}
\end{definition}

\begin{remark}
Notice that if $\cC$ is a convex cone, then it is trivially similar to itself. Also, if $\cC$ is similar to the convex cones $\cC^{(c)}_1$ and $\cC^{(c)}_2$ then $\cC^{(c)}_1 = \cC^{(c)}_2$ by the geometry of the convex cones.
\end{remark}

By monotonicity we obtain the following corollary from Theorem \ref{thm:cone_base}

\begin{corollary}
Assume the hypothesis of of Corollary \ref{cor:eventual_cover}
holds. Given $\cC \ss \R^d$ similar to a closed convex cone
$\cC^{(c)}$ with vertex
at the origin, for all $x \in \cC$,
\begin{align*}
\lim_{t\to\8} \frac{\max(H_{(x,t)},0)}{t} =
\lim_{t\to\8}\frac{\E \max( H_{(x,t)},0)}{t} =
\sup_{t>0}\frac{\E \H_t^{\1\Theta\1\cC^{(c)}\2,0\2}}{t} < \8,
\end{align*}
where the first limit is in $L_1$.
\end{corollary}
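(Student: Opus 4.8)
The plan is to squeeze $\max(H_{(x,t)},0)/t$ between two quantities whose $L_1$-limits are both known to equal $Z := \sup_{t>0}\E\H_t^{(\Theta(\cC^{(c)}),0)}/t$ via Theorem \ref{thm:cone_base}, using only the monotonicity in $\cC$ recalled in Subsection \ref{sec:23}. Since $\cC$ is similar to $\cC^{(c)}$, there are $v_\pm\in\R^d$ with $\cC^{(c)}+v_-\ss\cC\ss\cC^{(c)}+v_+$. Fix $x\in\cC$. Denote by $H^{-}$, $H$, $H^{+}$ the height processes built (with the same Poisson rain and marks) from the substrates $\cC^{(c)}+v_-$, $\cC$, $\cC^{(c)}+v_+$ respectively. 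Monotonicity in $\cC$ gives, for all $(x,t)$,
\begin{align*}
H^{-}_{(x,t)} \leq H_{(x,t)} \leq H^{+}_{(x,t)},
\end{align*}
hence the same inequalities for the positive parts and after dividing by $t$.

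Next I would identify the two outer limits. The substrate $\cC^{(c)}+v_+$ is a translate of the cone $\cC^{(c)}$; applying the measure-preserving shift $S_{(v_+,0)}$ (equivalently, re-centering the space at $v_+$) one has $H^{+}_{(x,t)} = H^{(\cC^{(c)})}_{(x-v_+,t)}\circ S_{(v_+,0)}$ where $H^{(\cC^{(c)})}$ is the height built from the true cone $\cC^{(c)}$ with vertex at the origin. By Theorem \ref{thm:cone_base} (Cases 1--3, which cover every $x-v_+\in\R^d$), $\max(0,H^{(\cC^{(c)})}_{(x-v_+,t)})/t \to Z$ in $L_1$, and since $S_{(v_+,0)}$ is measure preserving this yields $\max(0,H^{+}_{(x,t)})/t \to Z$ in $L_1$, with also $\E\max(0,H^{+}_{(x,t)})/t\to Z$. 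The identical argument applied to the translate $\cC^{(c)}+v_-$ gives $\max(0,H^{-}_{(x,t)})/t \to Z$ in $L_1$. Note here it is essential that $x\in\cC$: we only need Case 2 of Theorem \ref{thm:cone_base} for the lower bound if $x-v_-\in\cC^{(c)}$, and Case 3 otherwise — but in any case Theorem \ref{thm:cone_base} supplies the $L_1$ convergence for an arbitrary point of $\R^d$, so this causes no difficulty.

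Finally, the $L_1$ sandwich closes the argument: from $0\le \max(0,H^{-}_{(x,t)})\le\max(0,H_{(x,t)})\le\max(0,H^{+}_{(x,t)})$ and the fact that the upper and lower bounds both converge to the constant $Z$ in $L_1$, one concludes $\max(0,H_{(x,t)})/t\to Z$ in $L_1$ (e.g.\ write $\E|\max(0,H_{(x,t)})/t - Z| \le \E|\max(0,H^{+}_{(x,t)})/t - Z| + \E|\max(0,H^{-}_{(x,t)})/t - Z|$, using that $Z$ sits between the two bounds' expectations for $t$ large); the convergence of $\E\max(0,H_{(x,t)})/t$ to $Z$ follows immediately, and the identification $Z=\sup_{t>0}\E\H_t^{(\Theta(\cC^{(c)}),0)}/t$ is exactly the conclusion of Theorem \ref{thm:cone_base}. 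The only mildly delicate point — and the one I would be most careful about — is the book-keeping of the translation: making sure that "built from $\cC^{(c)}+v_\pm$" is genuinely a shifted copy of "built from $\cC^{(c)}$" under $S_{(v_\pm,0)}$, which is immediate from the compatibility property of the flow $S$ and the translation-invariance of the recursion \eqref{eq:recursionH}, but must be invoked explicitly so that Theorem \ref{thm:cone_base} applies verbatim.
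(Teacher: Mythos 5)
Your proposal is correct and follows essentially the same route as the paper, which derives the corollary from Theorem \ref{thm:cone_base} purely by the monotonicity in $\cC$: sandwiching $H$ between the height processes for the translates $\cC^{(c)}+v_-$ and $\cC^{(c)}+v_+$, whose normalized positive parts both converge in $L_1$ to the same constant $Z$ by translation invariance of the law. Your explicit book-keeping of the shift $S_{(v_\pm,0)}$ and the pointwise bound $|\max(0,H_{(x,t)})/t - Z|\le |\max(0,H^{+}_{(x,t)})/t - Z| + |\max(0,H^{-}_{(x,t)})/t - Z|$ correctly fills in the details the paper leaves implicit.
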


\section{Appendix}

\textit{Proof of Lemma \ref{lem:L1bound}}

The proof leverages the ideas developed in the proof of Theorem 2 in \cite{MR2865637}.
We use the same discretization of time and space
as in the proof of this theorem to show the following:
\begin{enumerate}
\item There exists a branching process constructed from an i.i.d. family of random variables
$\{(v_i,s_i)\}_i$ with light-tails. For a given $i$, $v_i$ denotes the number
of offsprings of $i$ and $s_i$ denotes the (common) {\em height} of its offspring.
\item 
For $n \in \N$, let $h(n)$ denote the maximum height of this branching process at generation $n$,
namely the maximum, over all lineages, of the sum of the heights of all generations in the lineage.
Then, in order to prove (\ref{eq:suphei}), it suffices to prove that $\E h(n) \leq Cn$ for every $n > 0$
for some finite $C$.
\end{enumerate}
For $n \in \N$, let $d_n$ denote the number of individuals of generation $n$ in this
branching process.  For $a>0$, let
\begin{align*}
D(a) := \bigcup_{n\geq 1}\{d_n > a^n\},\quad
\bar D(a) := \W \sm D(a).
\end{align*} 
Let $a_m = Cm$, $m \in \N$.
From Chernoff's inequality,
for some sufficiently large constant $C>0$,
\begin{align*}
\P(D(a_m)) \leq 2^{-m}.
\end{align*}
From Chernoff's inequality again, we get that
for all $i \in \N, \d > 0$ and $c_m > 0$ to be fixed we get,
\begin{align*}
\P\1\left\{\frac{h(n)}{n} > (c_m+i)\right\} \cap \bar D(a_m)\2 \leq \1a_m\E(e^{\d s})e^{-\d c_m}\2^ne^{-\d n i},
\end{align*}
where $s$ is a typical height. Therefore,
\begin{align*}
\E\1\frac{h(n)}{n}\2 &= \sum_{m\geq1} \E\1\frac{h(n)}{n}1_{\bar D(a_m) \sm \bar D(a_{m-1})}\2,\\
&\leq \sum_{m\geq1} \1\sum_{i\geq 0} \P\1\3\frac{h(n)}{n} > (c_m+i)\4 \cap \bar D(a_m)\2\2 + \P(\bar D(a_{m-1}))c_m,\\
&\leq \sum_{m\geq1} \1\frac{a_m\E(e^{\d s})e^{-\d c_m}}{1-e^{-\d}}\2^n + 2(2^{-m}c_m).
\end{align*}
Now we fix $\d$ sufficiently small such that $\E(e^{\d s}) < \8$. Recalling that $a_m = Cm$, in order
to conclude the proof, it suffices to construct $c_m$ independent of $n$, such that
\begin{align*}
\1Cme^{-\d c_m}\2^n \leq 2^{-m},\quad
\sum_{m\geq 1} 2^{-m}c_m < \8,
\end{align*}
where $C$ is a constant independent of $n$.
The last bound is satisfied for $c_m = Bm$ for any $B>0$.
However, for $B$ sufficiently large $Ce^{-\d Bm} \leq 4^{-m}$ which concludes the proof.


\bibliographystyle{plain}
\bibliography{mybibliography}

\end{document}